\newcommand{\pushoutcorner}[1][dr]{\save*!/#1+1.2pc/#1:(1,-1)@^{|-}\restore}
\newcommand{\pullbackcorner}[1][dr]{\save*!/#1-1.2pc/#1:(-1,1)@^{|-}\restore}
\newtheorem{theorem}{Theorem}[section]
\newtheorem{proposition}[theorem]{Proposition}
\newtheorem{lemma}[theorem]{Lemma}
\newtheorem{remark}[theorem]{Remark}
\theoremstyle{definition}
\newtheorem{axiom}[theorem]{Axiom}
\newtheorem{definition}[theorem]{Definition}
\newcommand{\sub}{\mathsf{01Sub}}
\newcommand{\monadf}{}
\newcommand{\btwo}{\mathbf{2}}
\newcommand{\bthree}{\mathbf{3}}
\newcommand{\fmap}{\monadf{F}\text{-}\mathbf{Map}}
\newcommand{\rmap}{\monadf{R}\text{-}\mathbf{Map}}
\newcommand{\lMap}{\monadf{L}\text{-}\mathbf{Map}}
\newcommand{\lmap}{\monadf{L}\text{-}\mathbf{map}}
\newcommand{\ctmap}{\monadf{C}^t\text{-}\mathbf{map}}
\newcommand{\rtmap}{\monadf{F}^t\text{-}\mathbf{Map}}
\newcommand{\cmap}{\monadf{C}\text{-}\mathbf{map}}
\newcommand{\cMap}{\monadf{C}\text{-}\mathbf{Map}}
\newcommand{\wmap}{\monadf{W}\text{-}\mathbf{Map}}
\newcommand{\catc}{\mathbb{C}}
\newcommand{\poprod}{\hat{\otimes}}
\newcommand{\id}{\operatorname{Id}}
\newcommand{\intv}{\mathbb{I}}
\newcommand{\monexpl}[2]{\hat \hom(#1 , #2)}
\title{Identity Types in Algebraic Model Structures and Cubical Sets}
\author{Andrew W Swan}
\begin{document}

\maketitle

\begin{abstract}
  We give a general technique for constructing a functorial choice of
  very good paths objects, which can be used to implement identity
  types in models of type theories in direct manner with little
  reliance on general coherence results. We give a simple proof that
  applies in algebraic model structures that possess a notion of
  structured weak equivalence, in a sense that we define here. We then
  give a more direct proof that applies both to the original BCH
  cubical set model and more recent variants. We give an explanation
  how this construction relates to the one used in the CCHM cubical
  set model of type theory.
\end{abstract}

\section{Introduction}
\label{sec:introduction}

In the original Bezem-Coquand-Huber cubical set model of type theory
in \cite{bchcubicalsets}, Bezem, Coquand and Huber only showed the
$J$-computation rule for identity types held only up to propositional
equality rather than the more usual definitional equality. In
\cite{swannomawfs} the author gave both an explanation for why this
was the case and a solution. The explanation was a Brouwerian counterexample
based on the nerve of a complete metric space, demonstrating that
there is no constructive proof that the necessary strict equality in
the $J$-computation rule holds. The solution was to consider a second
more elaborate definition of identity type.

The motivation for the work here is to give a more conceptually clear
proof of this construction, by viewing it in terms of the
cofibration-trivial fibration factorisation of an algebraic model
structure. From this point of view the basis of the construction is
a functorial version of
the following simple and well known trick. Suppose we are given a
fibration $f \colon X \to Y$ in a model structure. A path object on
$f$ is a factorisation of the diagonal map
$\Delta_f \colon X \to X \times_Y X$ as a weak equivalence
$r \colon X \to P f$ followed by a fibration
$p \colon P f \to X \times_Y X$. A very good path object is the same,
but where $r$ is required to be a trivial cofibration, not just a weak
equivalence. If we are given a path object, then we can use it to
produce a very good path object, by factorising the weak equivalence
$r$ as a cofibration $C r$ followed by a trivial fibration $F^t r$, as
in the diagram below.
\begin{equation*}
  \begin{gathered}
    \xymatrix{ X \ar[rr]^\Delta \ar[dr]_r \ar[d]_{C r} & & X \times_Y X \\
      M r \ar[r]_{F^t r} & P f \ar[ur]_p & }
  \end{gathered}
\end{equation*}
By the $3$-for-$2$ property $C r$ is a weak equivalence, and so a
trivial cofibration, and $p \circ F^t r$ is fibration, making $M r$ a
very good path object.

We will see a more complete proof (in comparison to
\cite{swannomawfs}) that the resulting structure can be used to
implement identity types. For this we will use a notion of stable
functorial very good path object due to Van den Berg and Garner
\cite{vdberggarnerpathobj}. This has the advantage that it can be used
to implement identity types in a relatively direct way with less
reliance on general coherence results (the advantages of this approach
will be discussed further in section
\ref{sec:identity-types-an-1}). However in order to satisfy this
definition it is no longer sufficient to work in a model
structure. Instead we build on the notion of \emph{algebraic model
  structure} due to Riehl \cite{riehlams}. We expand on Riehl's
definition by adding a structured notion of weak equivalence and show
how to use such structures to construct stable functorial choices of
very good path object. We then adapt the straightforward proof above
to a functorial version using this definition.

It is however non trivial to satisfy this definition of algebraic
model structure with structured weak equivalence. Because of this, we
will also give a more direct proof that the same construction yields
identity types in BCH cubical sets, and many other categories.

In a separate paper the author will prove that BCH cubical sets do in
fact satisfy this definition of algebraic model structure with
structured weak equivalences. This will use a result due to Sattler
\cite{sattlermodelstructures}, together with Huber's proof that the
universe of small types is fibrant \cite{huberthesis}, combined with
an argument using Grothendieck fibrations and other
observations. Moreover, further results by Sattler (currently
unpublished) suggest this
extra structure can be found in a wide variety of categories,
including CCHM cubical sets.

An earlier draft of this work was circulated online, and for reference
remains available at \cite{swanidamsdraft}. The main change is that
the original draft contained some rather messy arguments based on the
concrete definition of the relevant awfs's in $01$-substitution
sets. In contrast, the results here will use an approach developed by
Gambino and Sattler in \cite{gambinosattlerpi} (and following some
suggestions by Peter Lumsdaine). This yields a much
clearer proof and much more general result, although essentially
following the same outline as the original. The definition of ams with
structured weak equivalences has been slightly generalised, but as we
will see it turns out to not be essentially different to the original
version. The earlier draft also contained an unproved claim that
$01$-substitution sets are an algebraic model structure with
structured weak equivalences, which as mentioned above will now appear
in a separate paper.

The construction of identity types in the
Cohen-Coquand-Huber-M\"{o}rtberg (CCHM) cubical set model
\cite{coquandcubicaltt} was inspired by this work via the earlier
draft and correspondence with the author. This included some
simplifications discovered by Cohen, Coquand, Huber and M\"{o}rtberg
that apply to CCHM cubical
sets. This definition was generalised to a large class of models by
Orton and Pitts in \cite{pittsortoncubtopos} and by Van den Berg and
Frumin in \cite{vdbergfrumin}. To be clear however, these
simplifications do not always apply, and for instance do not include
the original BCH cubical set model, which will be covered by the
results in this paper. In section \ref{sec:simpl-when-step} we will
give an abstract view of the the CCHM definition and see how it
relates to the definition given here.

\subsection*{Acknowledgements}
\label{sec:acknowledgements}

I'm grateful for useful discussions, suggestions, corrections, etc
from Benno van den Berg, Martijn den Besten,
John Bourke, Thierry Coquand, Nicola Gambino,
Simon Huber, Peter Lumsdaine, Ian Orton, Andy Pitts, Emily Riehl and
Christian Sattler.

A large part of this work was carried out while at the
University of Leeds, under EPSRC grant no. EP/K023128/1.

\section{Review of Algebraic Weak Factorisation Systems and Model Structures}
\label{sec:background}

\begin{definition}
  \label{def:8}
  Let $\catc$ be a category and let $i : U \rightarrow V$ and $f : X
  \rightarrow Y$ be morphisms in $\catc$. We write $i \pitchfork f$
  and say $i$ has the left lifting property with respect to $f$ and
  $f$ has the right lifting property with respect to $i$ if the
  following holds.
  For every commutative square of the form
  \begin{equation*}
    \begin{gathered}
      \xymatrix{ U \ar[r] \ar[d]_i & X \ar[d]_f \\
        V \ar[r] & Y}
    \end{gathered}
  \end{equation*}
  there is a diagonal map $j$ as below, making the two triangles
  commute.
  \begin{equation*}
    \begin{gathered}
      \xymatrix{ U \ar[r] \ar[d]_i & X \ar[d]_f \\
        V \ar[r] \ar@{.>}[ur]|j & Y}
    \end{gathered}
  \end{equation*}
\end{definition}

\begin{definition}
  \label{def:7}
  Let $\catc$ be a category and $\mathcal{M}$ a class of maps in
  $\catc$. We define
  \begin{align}
    \mathcal{M}^\pitchfork &= \{ f \;|\; (\forall i \in
    \mathcal{M})\,i \pitchfork f \} \\
    {}^\pitchfork \mathcal{M} &= \{ i \;|\; (\forall f \in
    \mathcal{M})\,i \pitchfork f \}    
  \end{align}
\end{definition}

\begin{definition}
  \label{def:6}
  Let $\catc$ be a category. A \emph{weak factorisation system} on
  $\catc$ consists of classes of maps $\mathcal{C}$ and $\mathcal{F}$
  such that $\mathcal{C} = {}^\pitchfork \mathcal{F}$ and $\mathcal{F}
  = \mathcal{C}^\pitchfork$ and every morphism $g$ in $\catc$ factors
  as $g = f \circ i$ with $i \in \mathcal{C}$ and $f \in
  \mathcal{F}$.
\end{definition}

\begin{definition}[Quillen]
  Let $\catc$ be a category. A \emph{model structure} on $\catc$
  consists of two weak factorisation systems $(\mathcal{C},
  \mathcal{F})$ and $(\mathcal{C}^t, \mathcal{F})$, together with a
  class of morphisms $\mathcal{W}$ such that the following hold.
  \begin{enumerate}
  \item $\mathcal{C}^t = \mathcal{C} \cap \mathcal{W}$
  \item $\mathcal{F}^t = \mathcal{F} \cap \mathcal{W}$
  \item ($3$-for-$2$) If $f \colon X \to Y$, $g \colon Y \to Z$ and $h
    := g \circ f$, and any two maps out of $f$, $g$ and $h$ belong to
    $\mathcal{W}$, then so does the third.
  \end{enumerate}
\end{definition}

\begin{definition}
  \label{def:9}
  Let $\catc$ be a category. A \emph{functorial factorisation} on
  $\catc$ consists of a functor $\catc^\btwo \rightarrow
  \catc^\bthree$ which is a section to the composition functor
  $\catc^\bthree \rightarrow \catc^\btwo$. We will usually write out a
  functorial factorisation as three separate components $L, K,
  R$ as follows. E.g., if $f$ is an object of $\catc^\btwo$ (i.e. a
  morphism in $\catc$) we might write the factorisation as
  \begin{equation*}
    \begin{gathered}
      \xymatrix{ X \ar[rr]^f \ar[dr]_{L f} & & Y \\
        & K f \ar[ur]_{R f} & }
    \end{gathered}
  \end{equation*}
\end{definition}

\begin{definition}[Grandis, Tholen]
  \label{def:3}
  Let $\catc$ be a category and $(L, R)$ a functorial factorisation
  on $\catc$. Note that $L$ is an endofunctor on $\catc^\btwo$ and can
  be made into a copointed endofunctor in a canonical way. Dually, $R$
  can be made into a pointed endofunctor. An \emph{algebraic weak
    factorisation system} on $\catc$ consists of a functorial
  factorisation together with a comultiplication map
  $\Sigma : L \rightarrow L^2$ making $L$ into a comonad and a
  multiplication map $\Pi : R^2 \rightarrow R$ making $R$ into a
  monad. Furthermore, the canonical map $L R \rightarrow R L$ is a
  distributive law.
\end{definition}

We will write the category of coalgebras over the comonad as $\lMap$
and the category of algebras over the monad as $\rmap$. In many cases
it is difficult (or impossible) to show that a map satisfies the
comultiplication law required to be an $L$-coalgebra. For this reason
we will usually work with the category of coalgebras over the underlying
copointed endofunctor of $L$. We will write this category as
$\lmap$. For example $\lmap$ is always closed under retracts whereas
$\lMap$ is not.

The dual issue for $R$-algebras does not cause problems in practice,
because of the following proposition.

\begin{proposition}
  Suppose that $(L, R)$ is a cofibrantly generated awfs on a category
  $\catc$. Then given a algebra structure for the underlying pointed
  endofunctor for $R$ on a map $f$ in $\catc$, we can functorially
  assign $f$ the structure of an algebra over the monad $R$.
\end{proposition}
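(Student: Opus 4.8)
The plan is to reduce the statement to the standard characterisation of monad algebras for a cofibrantly generated awfs in terms of lifting structure against the generators. Since $(L,R)$ is cofibrantly generated, it is produced by the small object argument from a small category $\mathcal{J}$ over $\catc^\btwo$, and by the universal property of that construction (see \cite{riehlams}) the category $\rmap$ of algebras for the monad $R$ is isomorphic, over $\catc^\btwo$, to the category of $\mathcal{J}$-algebras: maps equipped with a coherent choice of diagonal filler for every lifting problem against a generator, natural in $\mathcal{J}$. Thus it suffices to produce, functorially and from a pointed-endofunctor algebra structure alone, such a lifting operation.

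First I would unfold the given data. An algebra structure for the underlying pointed endofunctor of $R$ on $f \colon X \to Y$ is a morphism $a \colon Rf \to f$ in $\catc^\btwo$ with $a \circ \eta_f = \id_f$; since $\eta_f = (Lf, \id_Y)$ and $a$ necessarily lies over $Y$, this amounts to a single map $s \colon Kf \to X$ satisfying $s \circ Lf = \id_X$ and $f \circ s = Rf$, that is, a section of $Lf$ in the slice over $Y$. From $s$ I would then build the required lifting operation. A generator $j \colon U_j \to V_j$ enters the construction of $Kf$ at its first step, so any lifting problem $(u,v) \colon j \to f$ induces a canonical map $\lambda \colon V_j \to Kf$ with $\lambda \circ j = Lf \circ u$ and $Rf \circ \lambda = v$, and I set the chosen filler to be $s \circ \lambda \colon V_j \to X$. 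The two triangles then follow formally: the upper one from $s \circ \lambda \circ j = s \circ Lf \circ u = u$ by the unit law, and the lower one from $f \circ s \circ \lambda = Rf \circ \lambda = v$.

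This assignment is manifestly functorial in $f$, since a morphism of pointed-endofunctor algebras commutes with the sections $s$ and hence with the induced fillers $s \circ \lambda$; it therefore defines a functor from pointed-endofunctor algebras to $\mathcal{J}$-algebras, and composing with the isomorphism $\mathcal{J}\text{-}\mathbf{Alg} \cong \rmap$ yields the desired functorial monad-algebra structure on $f$.

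The main obstacle is the coherence, or naturality, of the constructed lifting operation with respect to the morphisms of the generating category $\mathcal{J}$, together with the precise identification of the canonical map $\lambda$ inside the (possibly transfinite) colimit defining $Kf$: one must check that $s \circ \lambda$ is stable under the action of $\mathcal{J}$, so that it genuinely determines a $\mathcal{J}$-algebra and not merely an unstructured choice of lifts. When $\mathcal{J}$ is a discrete set of generators this is vacuous, but in the general categorical setting it requires tracing how $\mathcal{J}$ acts on the step-one pushout; I expect this to be the only nonroutine verification, with the unit-law and functoriality checks above being formal.
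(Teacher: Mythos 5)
Your proof is correct and is essentially the same argument as the one the paper relies on: the paper's proof is just a citation of \cite[Lemma 2.30]{riehlams}, which likewise passes through Garner's identification of monad $R$-algebras with $\mathcal{J}$-algebras over $\catc^\btwo$ and obtains the lifting function by composing the canonical map $\lambda \colon V_j \to Kf$ with the section $s \colon Kf \to X$ supplied by the pointed-endofunctor algebra structure. The coherence concern you flag is indeed the only point requiring care, and it resolves exactly as you anticipate: in Garner's small object argument the step-one object is formed as a coend over $\mathcal{J}$, so the maps $\lambda$ are natural in $j \in \mathcal{J}$ by construction and $s \circ \lambda$ automatically constitutes a $\mathcal{J}$-algebra, with functoriality in $f$ following from naturality of the step-one construction together with the algebra-map condition $t \circ K(h,k) = h \circ s$.
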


\begin{proof}
  See e.g. \cite[Lemma 2.30]{riehlams}
\end{proof}

For $R$, we will only every work over the category of (monad)
algebras, $\rmap$. The reason is that for cubical sets (as in
\cite{bchcubicalsets} and \cite{coquandcubicaltt}) the definition of Kan
fibration is already fixed and used in the interpretation of type
theory, and in practice $\rmap$ corresponds more closely to these
definitions.

There is also a special case where we can do the same for left maps,
as we'll see in proposition \ref{prop:fixcofibs}.

Algebraic model structures were developed by Riehl in \cite{riehlams}.
Before giving Riehl's definition, we first define a weaker version
that will play an important role.

\begin{definition}
  A \emph{pre algebraic model structure} (pre-ams) consists of two
  awfs's $(C^t, F)$ and $(C, F^t)$ together with a morphism of awfs's
  $\xi \colon (C^t, F) \rightarrow (C, F^t)$. We refer to the morphism
  $\xi$ as the \emph{comparison map} of the pre-ams.
\end{definition}

\begin{definition}[Riehl]
  An \emph{algebraic model structure} consists of a pre algebraic
  model structure on a complete and cocomplete category together with
  a class of maps $\mathcal{W}$ such that if $\mathcal{C}$ is the
  class of maps that admit a (copointed endofunctor) $C$-coalgebra
  structure, $\mathcal{F}$ is the class of maps that admit a (pointed
  endofunctor) $F$-algebra structure, then
  $(\mathcal{C}, \mathcal{F}, \mathcal{W})$ is a model structure.
\end{definition}

\section{Identity Types in an Awfs}
\label{sec:identity-types-an-1}

In this section we will review a description of the semantics for
identity types due to Van den Berg and Garner \cite[Section
3]{vdberggarnerpathobj}. We first talk about two related well known
issues with the implementation of identity types that arise in the
description of identity types by Awodey and Warren, and are elegantly
resolved by the Van den Berg-Garner definition.

\subsection{Coherence for $J$-Terms}
\label{sec:coherence-j-terms}

It is a well known issue in type theory that great care needs to taken
to ensure the interpretation of type theory into categorical semantics
is correct, due to so called coherence issues. This was noticed and
then solved by Hofmann in \cite{hofmannlcc} for the interpretation of
extensional type theory into a locally cartesian closed
category. Essentially the issue is as follows.

In most formulations of models of type theory, such as categories with
families (CwFs), one needs to have a notion of substitution for types
and terms, and furthermore the substitution needs to be strict in the
following sense. If we are given morphisms of contexts
$\sigma \colon \Xi \rightarrow \Delta$ and
$\tau \colon \Delta \rightarrow \Gamma$, and a type $\Gamma \vdash X$,
then we need to ensure that $X[\tau][\sigma]$ is strictly equal to
$X[\tau \circ \sigma]$ (as types in context $\Xi$). Similarly for
terms.

For the interpretation of extensional type theory in a locally
cartesian closed category, this is an issue for interpreting types,
but the interpretation of terms is not a problem.

On the other hand when Awodey and Warren developed the interpretation
of identity types using very good path objects in \cite{awodeywarren},
there is a coherence issue for terms. Specifically, as Awodey and
Warren explain in \cite[Section 4.1]{awodeywarren}, they do not ensure
that $J$ terms are preserved by substitution. The reason is that $J$
terms are interpreted as diagonal fillers of certain lifting
problems. In a weak factorisation system, we are only guaranteed that
at least one filler exists for every lifting problem of a trivial
cofibration against a fibration. So there's no reason to expect the
different choices of fillers to agree with each other under
substitution.

It is possible to fix this issue using general coherence theorems,
such as local universes (developed by Lumsdaine and Warren in
\cite{lumsdainewarren}) or using a universe of small types, as used
for simplicial sets in \cite{voevodskykapulkinlumsdainess}. However,
the Van den Berg-Garner interpretation allows us to
deal with this problem in a much more direct way, with less reliance
on general coherence results, and which can be used directly in an
existing CwF with less work.

\subsection{The Computational Meaning of Transport}
\label{sec:comp-mean-transp}

In more computationally minded approaches to homotopy type theory,
there is an emphasise on the computational meaning of transport, which
is in turn is strongly connected with the $J$ terms.

For example, this was noticed early on by Harper and Licata in
\cite{harperlicata}, and by Bezem, Coquand and Huber in
\cite{bchcubicalsets}, but also plays an important part in more recent
developments.

The issue is as follows. Suppose we are given a type $A$ in context
$\Gamma$. For simplicity, say that $\Gamma$ consists of a single type
$C$. Suppose further we are given $c_0, c_1 : C$ and also a term $p$
of type $\id_C(c_0, c_1)$. Then we need to show how we can take a term
$a$ of type $A(c_0)$ and then compute a new term of type
$A(c_1)$. Using univalence we show that there are non trivial
instances of this problem. We take $C$ to be the universe of small
types, $A(x)$ is defined to be the type $\operatorname{El}(x)$, and
$p$ is an identity between two types constructed from an equivalence
using univalence. Then in order to compute what the transport should
be, we need to recover the computational information from the
equivalence that we put in.

The solution is that whenever we interpret a type $\Gamma \vdash A$ in
a model, we include all the computational information we need about
how to compute transport in $A$ for paths in $\Gamma$. The Van den
Berg-Garner approach allows us to clearly see the necessary structure
in an abstract way which is conceptually very similar to the
Awodey-Warren approach. Namely, we work in a setting where it is
natural to view fibrations not as a class of maps but as algebras over
a monad. The computational information we need to associate with a
type is precisely contained in an algebra structure over the monad.

\subsection{The Van den Berg-Garner Interpretation of Identity Types}
\label{sec:van-den-berg}

The key part of the Van den Berg-Garner interpretation is that instead
of a weak factorisation system, they use an \emph{algebraic weak
  factorisation system}\footnote{The exact formulation used
  by Van den Berg and Garner is not quite an awfs, but a slightly
  weaker notion.}.

With an awfs $(L, R)$, it is natural to view fibrations not as just a
class of maps (as is the case for wfs's) but instead as a category of
algebras over the monad $R$. Meanwhile trivial cofibrations are best
viewed as coalgebras. Given a map $m$ together with coalgebra structure
and a map $f$ with algebra structure, $f$ not only has the right
lifting property against $m$, but using the structures, we have a
choice of diagonal filler. Furthermore, given a morphism of coalgebras
and a morphism of algebras, we also get compatible diagonal fillers.

Then, as Van den Berg and Garner show in \cite[Section
2]{vdberggarnerpathobj}, we can use this to define a type category
where types are implemented as $R$-algebras, and substitution is
implemented as pullbacks that preserve $R$-algebra structure. Then the
algebra structure contains the computational information that we need
to implement transport.

Finally, Van den Berg and Garner implement identity types in the type
category using the following definition\footnote{We change the
  terminology to fit better with other ideas in this paper.}. Observe
that we require not just that certain maps are fibrations
and trivial cofibrations, but that we are given a choice of algebra
and coalgebra structure. This allows one to give an explicit definition
of diagonal fillers and thereby of the interpretation of $J$-terms.
Furthermore, we require functoriality with respect to trivial cofibrations and
fibrations everywhere. This ensures that the choice of diagonal
fillers, and so of $J$ terms is stable under substitution.

\begin{definition}[Van den Berg, Garner]
  \label{def:strpathobj}
  \begin{enumerate}
  \item A \emph{choice of very good path objects} consists of an
    assignment to every $F$-map $f : X \rightarrow Y$ a factorisation
    \begin{equation}
      \label{eq:7a}
      X \stackrel{r_f}{\rightarrow} P(f) \stackrel{p_f}{\rightarrow} X
      \times_Y X
    \end{equation}
    of the diagonal $\Delta : X \rightarrow X \times_Y X$ together
    with an $C^t$-coalgebra structure on $r_f$ and an $F$-algebra
    structure on $p_f$.
  \item A choice of very good path objects is \emph{functorial} if
    the assignment of \eqref{eq:7a} provides the action of objects of a
    functor $\fmap \rightarrow \fmap \times_{\catc} \ctmap$.
  \item A choice of very good path objects is \emph{stable} when
    every map of $F$-algebras whose underlying square is a pullback
    makes the following square given by functoriality a pullback
    \begin{equation*}
      \begin{gathered}
        \xymatrix{P(f) \ar[r]^{P(h,k)} \ar[d]_{p_f} & P(f')
          \ar[d]_{p_{f'}} \\
          X \times_Y X \ar[r] & X' \times_{Y'} X'}
      \end{gathered}
    \end{equation*}
  \item The awfs is \emph{Frobenius} if to every square
    \begin{equation*}
      \begin{gathered}
        \xymatrix{ f^\ast X \ar[r]^{\bar f} \ar[d]_{\bar i} & X
          \ar[d]_i \\
          Z \ar[r]^f & Y}
      \end{gathered}
    \end{equation*}
    together with an $F$-algebra structure on $f$ and $C^t$-coalgebra
    structure on $i$ we have assigned an $C^t$-coalgebra structure on
    $\bar i$. It is \emph{functorially Frobenius} if this assignment
    gives rise to a functor $\fmap \times_{\catc} \ctmap \rightarrow
    \ctmap$.
  \item A \emph{homotopy theoretic model of identity types} is a
    finitely complete category $\catc$ together with an awfs that is
    functorially Frobenius and has a stable functorial choice of
    very good path objects.
  \end{enumerate}
\end{definition}

\begin{theorem}[Van den Berg, Garner]
  \label{thm:htmidtsound0}
  Every homotopy theoretic model of identity types gives rise to a
  model of type theory with identity types.
\end{theorem}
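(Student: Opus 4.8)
The plan is to reconstruct the type-category semantics of Van den Berg and Garner and then verify the rules governing identity types inside it. First I would fix the interpretation of the dependent type theory without identity types. Contexts are interpreted as objects of $\catc$; a type $\Gamma \vdash A$ as an $F$-algebra $f \colon X \to \Gamma$, that is, an object of $\fmap$ whose codomain is the interpretation of $\Gamma$; a term $\Gamma \vdash a : A$ as a section of $f$; and context extension $\Gamma.A$ as the domain $X$. The reason for working with an awfs rather than a bare weak factorisation system is that substitution along $\sigma \colon \Delta \to \Gamma$ can then be interpreted by pullback: the pullback of an $F$-algebra carries a canonical $F$-algebra structure, and this assignment is strictly functorial. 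This yields the strictness equations $A[\tau][\sigma] = A[\tau \circ \sigma]$ discussed in Section~\ref{sec:coherence-j-terms} on the nose, resolving the coherence problem for types without any appeal to general coherence machinery.

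Next I would interpret the identity types using the stable functorial choice of very good path objects. For a type $f \colon X \to \Gamma$, the factorisation $X \xrightarrow{r_f} P(f) \xrightarrow{p_f} X \times_\Gamma X$ supplies the identity type: $\id_A$ is the $F$-algebra $p_f$, viewed as a type over the context presented by $X \times_\Gamma X$. The formation rule is thus immediate. The introduction rule $\refl$ is interpreted by $r_f$, which is a lift of the diagonal $\Delta \colon X \to X \times_\Gamma X$ and so interprets reflexivity. For the elimination rule, a motive $C$ over $\id_A$ is an $F$-algebra over $P(f)$, and the method term is a section of the pullback $r_f^\ast C$ over $X$; since $r_f$ carries a $C^t$-coalgebra structure and $C$ carries an $F$-algebra structure, the awfs furnishes a canonical diagonal filler of the square whose left edge is $r_f$ and whose right edge is $C$. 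This filler is the interpretation of the $J$-term, and because the upper triangle of the lifting square commutes strictly, the filler restricts along $r_f$ to the method term. This is exactly the $J$-computation rule, and it holds as a definitional equality, in contrast to the merely propositional version obtained in \cite{bchcubicalsets}.

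The remaining two hypotheses are what make this interpretation coherent. The functorial Frobenius condition is needed for the dependent form of elimination: to lift the method over $X$ to a term over $P(f)$ when the motive itself depends on the identity type, one must pull $r_f$ back along the display map of the motive and know that the result is again a structured trivial cofibration; this is precisely the assignment of a $C^t$-coalgebra structure to $\bar i$ in the Frobenius condition, and its functoriality keeps the resulting filler coherent. The stability and functoriality of the choice of path objects then guarantee that $\id_A$ and the fillers interpreting $J$ are preserved strictly under substitution: functoriality gives a morphism of $F$-algebras for each reindexing, and stability forces the comparison square on $P$ to be a pullback, so that $\id_{A[\sigma]}$ is the pullback of $\id_A$ and the canonical filler for $A[\sigma]$ is the reindexing of the canonical filler for $A$. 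I would verify each equation by tracing the relevant squares through the functor $\fmap \to \fmap \times_\catc \ctmap$ and using naturality of the awfs lifting operation in morphisms of coalgebras and of algebras.

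The main obstacle I expect is exactly this substitution-stability of the $J$-terms. In a bare weak factorisation system the fillers interpreting $J$ are merely chosen, with no reason to commute with reindexing, and this is the classical coherence defect for terms recalled in Section~\ref{sec:coherence-j-terms}. Definition~\ref{def:strpathobj} is engineered to remove the obstacle structurally, so the real content of the proof is the bookkeeping that shows the canonical fillers determined by the coalgebra and algebra structures are natural in morphisms of $F$-algebras and are carried to pullback squares by the stable functorial path-object assignment. Once this naturality and stability are established, the formation, introduction, elimination and definitional computation rules all follow, and crucially the construction never appeals to general coherence theorems such as local universes or a universe of fibrant types.
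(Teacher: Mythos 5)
The paper does not actually prove Theorem \ref{thm:htmidtsound0}: its ``proof'' is a pointer to \cite[Section 3.3]{vdberggarnerpathobj}, and your outline is essentially a reconstruction of that cited argument --- types as $F$-algebras with substitution by structure-preserving pullback, $\id_A$ interpreted by $p_f$, $\refl$ by $r_f$, the $J$-term as the structured filler of $r_f$ against the display map of the motive (so the computation rule holds definitionally), and substitution-stability of $J$ secured by functoriality and stability of the path objects together with naturality of the lifting operation in morphisms of coalgebras and algebras. In approach you therefore agree with the source the paper defers to, and you correctly identify the term-level coherence of the fillers as the real content.

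Two points need repair. First, your opening claim that pullback reindexing of $F$-algebras is \emph{strictly} functorial, so that $A[\tau][\sigma] = A[\tau \circ \sigma]$ holds on the nose, is false as stated: chosen pullbacks compose only up to canonical isomorphism, and carrying an $F$-algebra structure does nothing to rigidify this. Van den Berg and Garner work with Pitts-style type categories, where substitution is given by chosen pullback squares without splitness equations; what the awfs machinery genuinely buys is coherence for \emph{terms} (the $J$-fillers commute with reindexing), which is exactly the issue raised in section \ref{sec:coherence-j-terms}, while type-level strictness still requires either that formulation or a separate, routine, splitting step --- it cannot be obtained by fiat from ``canonical'' pullbacks. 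Second, your account of where Frobenius enters is off: the motive of $J$ \emph{always} depends on the identity type, and the basic lifting of $r_f$ against the display map of the motive $C$ needs no Frobenius at all. Frobenius is needed because the elimination rule must be available in arbitrarily extended contexts $\Gamma.A.A.\id_A.\Delta$ (equivalently, the eliminator must be stable under weakening along display maps): there one must lift the pullback of $r_f$ along the fibration displaying $\Delta$, and the functorial Frobenius structure is precisely what equips that pullback with a $C^t$-coalgebra structure, coherently in all the data.
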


\begin{proof}
  See \cite[Section 3.3]{vdberggarnerpathobj}.
\end{proof}

\begin{remark}
  We leave it for future work to check that theorem
  \ref{thm:htmidtsound0} can be proved constructively.
\end{remark}

\section{Identity Types in an Algebraic Model Structure}
\label{sec:identity-types-an}

In this section we give our first construction of a stable functorial
choice of very good path objects. This is an intuitively clear proof,
based on a simple trick that can be carried out in any model
structure. However, showing that BCH cubical sets satisfy the
conditions needed in order to apply the theorem is difficult, and in
fact will appear in a separate paper. We include the
theorem here anyway, since it illustrates the main idea which will be
used in section \ref{sec:some-suff-cond}, where the results can be
easily applied to cubical sets.

\subsection{Structured Weak Equivalences}
\label{sec:functorial-3-2}

\begin{definition}
  Suppose we are given a pre-ams $\xi : (C^t, F) \rightarrow (C, F^t)$
  on a category $\catc$.

  Suppose further we are given a category $\wmap$ together with a
  faithful functor $\wmap \to \catc^\btwo$. We refer to objects in
  $\wmap$ as \emph{structured weak equivalences}. Given $f$ in
  $\catc^\btwo$, we say a \emph{weak equivalence structure on $f$} is
  an object in the preimage of $f$. Given objects $f$ and $g$ of
  $\catc^\btwo$ together with weak equivalence structures on $f$ and
  $g$, we say a morphism in $\catc^\btwo$ (i.e. a commutative square)
  is a \emph{morphism of weak equivalences} if it is the image of
  a morphism in $\wmap$ between structured weak equivalences.
\end{definition}



\begin{definition}
  \label{def:13}
  Suppose we are given a pre-ams $\xi : (C^t, F) \rightarrow (C, F^t)$
  on a category $\catc$ together with a faithful functor $\wmap \to
  \catc^\btwo$. A \emph{functorial 3-for-2 operator} is the
  following. Given morphisms $f_1, f_2, f_3$ such that
  $f_3 = f_2 \circ f_1$ and suppose for $i \neq j \in \{1,2,3\}$ we are given
  weak equivalence structures on $f_i$ and $f_j$, then writing $k$ for
  the remaining element of $\{1,2,3\}$ we have assigned a weak
  equivalence structure on $f_k$. Furthermore, these assignments are
  functorial, in the following sense. Suppose we are given a
  commutative diagram as below.
  \begin{equation*}
    \begin{gathered}
      \xymatrix{ U \ar[r]^{f_1} \ar[d] & V \ar[r]^{f_2} \ar[d] & W
        \ar[d] \\
        X \ar[r]^{g_1} & Y \ar[r]^{g_2} & Z}
    \end{gathered}
  \end{equation*}
  Let $f_3 := f_2 \circ f_1$ and $g_3 := g_2 \circ g_1$ and write
  $\alpha_1$ for the left hand square $\alpha_2$ for the right hand
  square and $\alpha_3$ for the big rectangle. If $i \neq j \in
  \{1,2,3\}$ and we are given weak equivalence structures on $f_i,
  g_i, f_j$ and $g_j$ such that $\alpha_i$ and $\alpha_j$ are
  morphisms of structured weak equivalences, then $\alpha_k$ is a
  morphism between the weak equivalence structures we have assigned on
  $f_k$ and $g_k$.
\end{definition}

\begin{definition}
  \label{def:14}
  An \emph{ams with structured weak equivalences} is a pre-ams on a
  finitely complete and cocomplete category $\catc$ together with a
  category $\wmap$, a faithful functor $\wmap \to \catc$ and the
  following:
  \begin{enumerate}
  \item A functorial 3-for-2 operator.
  \item Given a weak equivalence structure and a $C$-coalgebra structure
    on each map $f$ a choice of $C^t$-coalgebra structure on $f$ which
    is the action on objects of a functor $\cmap \times_{\catc^\btwo}
    \wmap \rightarrow \ctmap$.
  \item Given a weak equivalence structure and a $F$-algebra structure
    on a map $f$ a choice of $F^t$-algebra structure which is the
    action on objects of a functor $\fmap \times_{\catc^\btwo} \wmap
    \rightarrow \rtmap$.
  \item Given a $C^t$-coalgebra structure on each map $f$, a choice of
    weak equivalence structure on $f$ which is the action on objects
    of a functor $\ctmap \rightarrow \wmap$.
  \item Given an $F^t$-algebra structure on each map $f$, a choice of
    weak equivalence structure on $f$ which is the action on objects
    of a functor $\rtmap \rightarrow \wmap$.
  \end{enumerate}
\end{definition}

\begin{remark}
  We don't assume the existence of any additional structure on
  $\wmap$. Recently Bourke has shown in \cite{bourkewealg} that in
  some natural situations weak equivalences can be viewed as algebras
  over a monad. However, he also showed that this is not the case
  e.g. for simplicial sets with the Kan model structure.
\end{remark}

\subsection{Some Remarks on Ams's with Structured Weak Equivalences}

\subsubsection{Pointwise Ams's with Structured Weak Equivalences}
\label{sec:pointwise-amss-with}

Given an ordinary model structure on a category $\catc$ there are
three main ways to define a model structure on a functor category
$\catc^\mathcal{A}$, under suitable conditions. These are
the projective model structure, injective model structure and the
Reedy model structure.

We will see however (following a suggestion by Emily Riehl) that there
is another option inherent in the definition of ams with structured
weak equivalence, that is different to the standard constructions for
ordinary model structures. This construction does not require any
additional assumptions on the underlying ams with structured weak
equivalences, or on $\mathcal{A}$ or $\catc$.

Suppose that we are given an ams with structured weak equivalences on
a category $\catc$, and another category $\mathcal{A}$. We will define a
new ams with structured weak equivalences on the functor category
$\catc^\mathcal{A}$.

First recall that given an awfs on $\catc$ we can define the
\emph{pointwise awfs} on $\catc^\mathcal{A}$.

\begin{definition}
  Suppose we are given an awfs $(L, R)$ on a category $\catc$. The
  \emph{pointwise awfs} on $\catc^\mathcal{A}$ is the awfs
  $(L_\mathcal{A}, R_\mathcal{A})$ defined as follows. Note that we
  need to define functors $(\catc^\mathcal{A})^\btwo \to
  (\catc^\mathcal{A})^\btwo$, however it suffices to instead define
  functors $\mathcal{A} \times (\catc^\btwo)^\mathcal{A} \to
  \catc^\btwo$. We define $L_\mathcal{A}$ by composition of $L$ with
  the evaluation map $\mathcal{A} \times (\catc^\btwo)^\mathcal{A} \to
  \catc^\btwo$, and define $R_\mathcal{A}$ by composition of $R$ with
  evaluation. We similarly define multiplication and comultiplication
  pointwise. Namely, $\mu$ needs to be a natural transformation
  $R_\mathcal{A}^2 \to R_\mathcal{A}$. Hence for each $f \in
  (\catc^\btwo)^\mathcal{A}$, we need $\mu_f$ to be a natural
  transformation from $R_\mathcal{A}^2(f) \to
  R_\mathcal{A}(f)$. Hence, for each $A \in \mathcal{A}$, we need
  a map $\mu_{f, A} \colon R^2(f(A)) \to R(f(A))$. We take $\mu_{f,
    A}$ to be $\mu_{f(A)}$. Comultiplication is defined
  similarly. Naturality and the other required equalities follow from
  the corresponding conditions on $(L, R)$.
\end{definition}

\begin{proposition}
  Let $(L, R)$ be an awfs on a category $\catc$, and $\mathcal{A}$ a
  small category. Suppose we are given a morphism $f$ in the functor
  category $\catc^\mathcal{A}$.
  Note that we can view $f$ as a functor
  $f \colon \mathcal{A} \to \catc^\btwo$. Then $R_\mathcal{A}$-algebra
  structures on $f$ naturally correspond to functors $\alpha \colon
  \mathcal{A} \to \rmap$ in the following commutative diagram.
  \begin{equation*}
    \xymatrix{ & \rmap \ar[d] \\
      \mathcal{A} \ar[ur]^\alpha \ar[r]_f & \catc^\btwo}
  \end{equation*}

  Dually for $L_\mathcal{A}$-coalgebra structures.
\end{proposition}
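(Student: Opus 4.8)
The plan is to exploit the fact that, by the very definition of the pointwise awfs, $R_\mathcal{A}$ acts on a morphism of $\catc^\mathcal{A}$ — regarded as a functor $f \colon \mathcal{A} \to \catc^\btwo$ under the identification $(\catc^\mathcal{A})^\btwo \cong (\catc^\btwo)^\mathcal{A}$ — simply by postcomposition with $R$, so that $R_\mathcal{A}(f) = R \circ f$, and its multiplication and unit are the whiskerings with $f$ of those of $R$. Under this identification an $R_\mathcal{A}$-algebra structure on $f$ is precisely a natural transformation $\theta \colon R \circ f \Rightarrow f$ of functors $\mathcal{A} \to \catc^\btwo$ satisfying the two monad-algebra laws, and these laws are equations of natural transformations, hence checked componentwise. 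This is an instance of the general principle that for a monad $T$ on a category $\mathcal{D}$ the postcomposition monad on $\mathcal{D}^\mathcal{A}$ has as algebras exactly the functors $\mathcal{A} \to \mathcal{D}^T$ into the category of $T$-algebras that lift the given functor; here $\mathcal{D} = \catc^\btwo$, $T = R$ and $\mathcal{D}^T = \rmap$.

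First I would fix an $R_\mathcal{A}$-algebra structure $\theta$ on $f$ and construct $\alpha$. For each object $A$ of $\mathcal{A}$ the component $\theta_A \colon R(f(A)) \to f(A)$ is a morphism of $\catc^\btwo$, and since the unit and associativity equations for $\theta$ hold componentwise, $\theta_A$ equips $f(A)$ with an $R$-algebra structure; I set $\alpha(A)$ to be this algebra. For a morphism $a \colon A \to A'$ of $\mathcal{A}$, naturality of $\theta$ at $a$ asserts exactly that the underlying square $f(a) \colon f(A) \to f(A')$ is compatible with the chosen algebra structures, i.e. that $f(a)$ is a morphism of $R$-algebras, so I set $\alpha(a) := f(a)$ carrying this extra data. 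Functoriality of $\alpha$ is immediate from that of $f$, and $\alpha$ lifts $f$ along the forgetful functor $\rmap \to \catc^\btwo$ by construction.

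Conversely, given a functor $\alpha \colon \mathcal{A} \to \rmap$ lifting $f$, I would read off from each $\alpha(A)$ its algebra structure map to obtain a family $\theta_A \colon R(f(A)) \to f(A)$; functoriality of $\alpha$ on morphisms is precisely the naturality of $\theta$, and the algebra laws holding at each object give the monad-algebra laws for $\theta$ componentwise. The two assignments merely repackage the same componentwise data and are therefore mutually inverse, yielding the asserted natural correspondence. The statement for $L_\mathcal{A}$-coalgebra structures follows by the formally dual argument, with the comonad $L$ and its category of coalgebras $\lMap$ in place of the monad $R$ and $\rmap$.

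The nearest thing to an obstacle is purely bookkeeping: one must pin down the equivalence $(\catc^\mathcal{A})^\btwo \cong (\catc^\btwo)^\mathcal{A}$ and verify that under it the structure maps of $R_\mathcal{A}$ are genuinely the whiskerings of those of $R$, so that the monad-algebra equations transfer componentwise rather than only up to coherence. Once this is checked, no further computation is required, since the whole proposition is an unwinding of the definition of the pointwise awfs against the definition of an algebra over a monad.
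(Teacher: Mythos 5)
Your proof is correct, and it is exactly the argument the paper has in mind: the paper's own ``proof'' is just the remark that the claim is straightforward to check, and your componentwise unwinding --- identifying $R_\mathcal{A}$ with postcomposition by $R$ under $(\catc^\mathcal{A})^\btwo \cong (\catc^\btwo)^\mathcal{A}$, so that algebra structures are natural transformations $\theta \colon R \circ f \Rightarrow f$ whose components and naturality squares assemble into a lift $\alpha \colon \mathcal{A} \to \rmap$ --- is the intended verification. Your closing caveat about checking that the structure maps of $R_\mathcal{A}$ really are whiskerings of those of $R$ is precisely what the paper's pointwise definition of $\mu_{f,A} = \mu_{f(A)}$ guarantees, so nothing further is needed.
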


\begin{proof}
  This is straightforward to check.
\end{proof}

\begin{definition}
  \label{def:pointwiseams}
  Given an ams $\xi \colon (C, F^t) \to (C^t, F)$ with structured weak
  equivalences $\wmap$ and a small category $\mathcal{A}$, we define
  the pointwise ams with structured weak equivalences as follows. We
  define $(C_\mathcal{A}, F^t_\mathcal{A})$ and $(C^t_\mathcal{A},
  F_\mathcal{A})$ to be be the pointwise awfs's. The comparison map is
  also defined pointwise.

  We define the category $\wmap_\mathcal{A}$ to be the functor
  category $\wmap^\mathcal{A}$. To define the functor
  $\wmap_\mathcal{A} \to (\catc^\mathcal{A})^\btwo$, we can instead
  define a functor
  $\mathcal{A} \times \wmap_\mathcal{A} \to \catc^\btwo$. We take this
  to be evaluation followed by the map $\wmap \to \catc^\btwo$.
\end{definition}

\begin{proposition}
  We can define the necessary functors to make definition
  \ref{def:pointwiseams} an ams with structured weak equivalences.
\end{proposition}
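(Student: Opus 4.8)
The plan is to reduce every clause of Definition~\ref{def:14} to the corresponding structure on $\catc$ applied pointwise, using the identification $(\catc^\mathcal{A})^\btwo \cong (\catc^\btwo)^\mathcal{A}$ throughout. First I record that the setup is already in place: $\catc^\mathcal{A}$ is finitely complete and cocomplete because $\catc$ is and (co)limits in a functor category are computed pointwise; the pointwise awfs's and pointwise comparison map of Definition~\ref{def:pointwiseams} form a pre-ams, since being a morphism of awfs's can be checked pointwise; and the functor $\wmap_\mathcal{A} = \wmap^\mathcal{A} \to (\catc^\mathcal{A})^\btwo$ is faithful, because a faithful functor $\wmap \to \catc^\btwo$ induces a faithful functor on $\mathcal{A}$-indexed diagrams (two natural transformations with the same image agree at each object, hence are equal). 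So only the five pieces of structure remain to be supplied.

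The key device is the preceding proposition together with its evident dual and its analogues for copointed-endofunctor coalgebras and pointed-endofunctor algebras: for each category of structured maps $\cmap$, $\fmap$, $\ctmap$, $\rtmap$ on $\catc$, a structure of the corresponding type on a map $f$ of $\catc^\mathcal{A}$ is the same thing as a functor from $\mathcal{A}$ into that category lifting $f \colon \mathcal{A} \to \catc^\btwo$. By Definition~\ref{def:pointwiseams} the analogue for $\wmap$ holds essentially by fiat: a weak equivalence structure on $f$ is exactly a functor $\mathcal{A} \to \wmap$ lifting $f$. Moreover a commutative square in $\catc^\mathcal{A}$ between weak equivalence structures is a morphism of weak equivalences precisely when each of its components is one in $\catc$, the required naturality upstairs in $\wmap$ being automatic from naturality downstairs and faithfulness.

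Given this dictionary, clauses (2)--(5) are immediate. In (2), a $C_\mathcal{A}$-coalgebra structure together with a weak equivalence structure on $f$ corresponds, since both lift the same $f \colon \mathcal{A} \to \catc^\btwo$, to a single functor $\mathcal{A} \to \cmap \times_{\catc^\btwo} \wmap$; post-composing with the functor $\cmap \times_{\catc^\btwo} \wmap \to \ctmap$ supplied by the ams on $\catc$ yields a functor $\mathcal{A} \to \ctmap$, i.e. a $C^t_\mathcal{A}$-coalgebra structure on $f$, and functoriality in $f$ is automatic since the construction is post-composition of functors. Clauses (3), (4), (5) are handled identically, post-composing with $\fmap \times_{\catc^\btwo} \wmap \to \rtmap$, $\ctmap \to \wmap$ and $\rtmap \to \wmap$ respectively.

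The one clause requiring genuine care is (1), the functorial 3-for-2 operator, since producing a weak equivalence structure in $\catc^\mathcal{A}$ is not merely a post-composition. Given $f_3 = f_2 \circ f_1$ in $\catc^\mathcal{A}$ with structures on $f_i, f_j$ (functors $\mathcal{A} \to \wmap$ lifting $f_i, f_j$), I apply the 3-for-2 operator on $\catc$ at each object $A$ to the composable pair $f_3(A) = f_2(A) \circ f_1(A)$, obtaining a weak equivalence structure on $f_k(A)$. To see these assemble into a functor $\mathcal{A} \to \wmap$ --- equivalently, that $f_k(a)$ is a morphism of weak equivalences for every $a \colon A \to A'$ --- I feed the two-composable-pairs diagram $f_\ell(a)$, whose top and bottom rows are the pairs at $A$ and $A'$, into the functoriality clause of the 3-for-2 operator on $\catc$: the squares $f_i(a)$ and $f_j(a)$ are morphisms of weak equivalences because $f_i, f_j$ are functors into $\wmap$, so that clause forces $f_k(a)$ to be a morphism between the assigned structures. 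The same two-level argument --- apply the operator on $\catc$ at objects of $\mathcal{A}$ for the assignment, and at morphisms of $\mathcal{A}$ to verify compatibility --- likewise establishes the functoriality clause of the operator in $\catc^\mathcal{A}$. This is the step where the \emph{functorial} strengthening of the plain 3-for-2 operator is essential; every other clause reduces to transporting the structure on $\catc$ through the pointwise correspondence.
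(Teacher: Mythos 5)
Your proof is correct and follows essentially the same route as the paper: the paper likewise defines everything pointwise, using the identification of pointwise structures with lifting functors $\mathcal{A} \to \cmap, \fmap, \ctmap, \rtmap, \wmap$ and post-composition with the given functors (phrased there via evaluation maps and currying). Your detailed two-level treatment of the 3-for-2 operator --- applying it at objects of $\mathcal{A}$ and invoking its functoriality clause, plus faithfulness of $\wmap \to \catc^\btwo$, at morphisms --- correctly fills in the one clause the paper's illustrative proof leaves implicit.
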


\begin{proof}
  These are once again defined pointwise. For illustration, we
  just consider the functor
  $\cmap_\mathcal{A} \times_{\catc^{\mathcal{A} \times \btwo}}
  \wmap_\mathcal{A} \to \rtmap_\mathcal{A}$. Note that this amounts to
  constructing a functor
  $\mathcal{A} \times \cmap^\mathcal{A} \times_{\catc^{\mathcal{A}
      \times \btwo}} \wmap^\mathcal{A} \to \rtmap$. However, the
  evaluation maps give us a functor
  $\mathcal{A} \times \cmap^\mathcal{A} \times_{\catc^{\mathcal{A}
      \times \btwo}} \wmap^\mathcal{A} \to \cmap
  \times_{\catc^{\btwo}} \wmap$. We can then compose this with the map
  we are given from $\cmap \times_{\catc^{\btwo}} \wmap$ to $\rtmap$
  to get the required structure.
\end{proof}

\subsubsection{An Explicit Definition for $\wmap$}
\label{sec:an-expl-defin}

We now show that without loss of generality we can assume $\wmap$ is
given by the following explicit definition\footnote{In an earlier
  draft of this paper this was taken as the definition of $\wmap$.}.

\begin{proposition}
  \label{prop:olddefswe}
  If we are given an ams with structured weak equivalences on a
  category $\catc$, then there is another ams with structured weak
  equivalences, with the same underlying model structure, such that
  $\wmap$ is defined as the pullback below.
  
  \begin{equation}
    \label{eq:55}
    \begin{gathered}
      \xymatrix{ \wmap \ar[r] \ar[d] \pullbackcorner & \rtmap \ar[d] \\
        \catc^\btwo \ar[r]_F & \catc^\btwo}
    \end{gathered}
  \end{equation}
\end{proposition}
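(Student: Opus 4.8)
The plan is to keep the underlying pre-ams fixed and only replace the category of structured weak equivalences, transporting all of the operators of Definition~\ref{def:14} across a pair of comparison functors. Write $\wmap_0$ for the given category of structured weak equivalences and let $\wmap$ denote the pullback in \eqref{eq:55}; thus an object of $\wmap$ is a map $f$ together with an $F^t$-algebra structure on its fibration part $Ff$, and the left projection $\wmap \to \catc^\btwo$ is faithful because the forgetful functor $\rtmap \to \catc^\btwo$ is faithful and faithfulness is stable under pullback of categories. The guiding intuition is that $f$ is a weak equivalence exactly when $Ff$ is a trivial fibration: since $Ff \in \mathcal{F}$ and the left factor of the $(C^t,F)$-factorisation is always a trivial cofibration, $3$-for-$2$ gives $f \in \mathcal{W}$ iff $Ff \in \mathcal{F}^t$, so an $F^t$-algebra structure on $Ff$ is the correct algebraic witness of a weak equivalence structure on $f$.

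The heart of the argument is a pair of functors over $\catc^\btwo$ relating $\wmap_0$ and $\wmap$, both built by using the functorial $3$-for-$2$ operator to relay a weak equivalence structure between $f$ and its fibration part $Ff$. To define $\Psi \colon \wmap_0 \to \wmap$, start from an old weak equivalence structure on $f$. The left factor of the $(C^t, F)$-factorisation carries a canonical $C^t$-coalgebra structure, so item (4) of Definition~\ref{def:14} for $\wmap_0$ endows it with a weak equivalence structure; applying the $3$-for-$2$ operator to the composite $f = Ff \circ (\text{left factor})$ then produces a weak equivalence structure on $Ff$; finally item (3), applied to $Ff$ equipped with its canonical $F$-algebra structure, yields an $F^t$-algebra structure on $Ff$, i.e.\ an object of $\wmap$ lying over $f$. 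For $\Phi \colon \wmap \to \wmap_0$ one runs this in reverse: an $F^t$-algebra structure on $Ff$ gives a weak equivalence structure on $Ff$ by item (5), the left factor again acquires one by item (4), and $3$-for-$2$ transfers these to a weak equivalence structure on $f$. Both assignments are functorial, since the canonical coalgebra and algebra structures on the factors are functorial in $f$ and the operators of $\wmap_0$ are functorial by hypothesis, and both commute with the projection to $\catc^\btwo$ by construction.

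It remains to transport the structure of Definition~\ref{def:14} to $\wmap$ and to check that the underlying model structure is unchanged. The operators are obtained by pre- or post-composing with $\Phi$ and $\Psi$: items (2) and (3) are the corresponding old functors precomposed with $\id \times \Phi$, which is well defined on the fibre products $\cmap \times_{\catc^\btwo} \wmap$ and $\fmap \times_{\catc^\btwo} \wmap$ precisely because $\Phi$ lies over $\catc^\btwo$; items (4) and (5) are the old functors postcomposed with $\Psi$; and the $3$-for-$2$ operator sends the two given structures through $\Phi$, applies the old $3$-for-$2$ operator, and returns via $\Psi$, its functoriality following from that of $\Phi$, $\Psi$ and the old operator. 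Finally, since $\Phi$ and $\Psi$ both lie over $\catc^\btwo$, a map admits a weak equivalence structure in $\wmap$ if and only if it admits one in $\wmap_0$, so the class $\mathcal{W}$, and hence the whole underlying model structure, is preserved. I expect the main obstacle to be exactly this bridging step: correctly using the $3$-for-$2$ operator to relay the weak equivalence structure between $f$ and $Ff$ in both directions while keeping every assignment functorial; once $\Phi$ and $\Psi$ are in place, the remaining transport of operators is routine.
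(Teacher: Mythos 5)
Your proposal is correct and follows essentially the same route as the paper: both directions are the paper's two functors over $\catc^\btwo$, built from the canonical $C^t$-coalgebra structure on the left factor, the functorial $3$-for-$2$ operator relaying the weak equivalence structure between $f$ and $Ff$, and items (3)--(5) of Definition~\ref{def:14} to convert between $F^t$-algebra and weak equivalence structures on $Ff$. The only difference is that you explicitly carry out the transport of the remaining operators along $\Phi$ and $\Psi$, a step the paper compresses into the remark that constructing the two functors over $\catc^\btwo$ suffices.
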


\begin{proof}
  Assume that we are given an ams with structured weak equivalences
  given by $\wmap$. Write $\wmap'$ for the category defined as in
  \eqref{eq:55}. Note that to show $\wmap'$ also
  gives structured weak equivalences, it suffices to show that we can
  construct functors from $\wmap'$ to $\wmap$ and from $\wmap$ to
  $\wmap'$ over $\catc^\btwo$.

  We first construct the functor $\wmap' \to \wmap$. Suppose we are
  given a map $f$ and an $F^t$-algebra structure on $F f$. We can use
  this to assign $F f$ the structure of a $\wmap$-weak
  equivalence. Since $C^t f$ is given the structure of a
  $C^t$-coalgebra using the comultiplication map, we can also assign
  it a $\wmap$-weak equivalence structure. Finally, we use the
  functorial $3$-for-$2$ operator to assign a $\wmap$-weak equivalence
  structure to $f$, the composition of $C^t f$ and $F f$.

  We now construct the functor $\wmap \to \wmap'$. Suppose we are
  given a map $f$ with $\wmap$-weak equivalence structure. We factor
  $f$ as $C^t f$ followed by $F f$, using the awfs $(C^t, F)$. We then
  also have a $\wmap$-weak equivalence structure on $C^t f$. Hence we
  can use the functorial $3$-for-$2$ operator to assign $F f$ a
  $\wmap$-weak equivalence structure. We then have both a $F$-algebra
  structure and a $\wmap$-weak equivalence structure on $F f$, which
  gives us an $F^t$-algebra structure on $F f$. But we have now given
  $f$ the structure of a $\wmap'$-structured weak equivalence.
\end{proof}

Note that dually, we could also choose $\wmap$ to be defined by the
pullback below.
\begin{equation*}
  \begin{gathered}
    \xymatrix{ \wmap \ar[r] \ar[d] \pullbackcorner & \ctmap \ar[d] \\
      \catc^\btwo \ar[r]_C & \catc^\btwo}
  \end{gathered}
\end{equation*}

Finally, we note that in this case we can drop one of the required
functions from the definition of ams with structured weak
equivalences.

\begin{proposition}
  Suppose we are given a pre-ams
  $\xi \colon (C^t, F) \rightarrow (C, F^t)$, that $\wmap$ is the
  category defined as in proposition \ref{prop:olddefswe} and
  $(C, F^t)$ is cofibrantly generated. Then we have in any case a
  functor $\fmap \times_{\catc^\btwo} \wmap \to \rtmap$.
\end{proposition}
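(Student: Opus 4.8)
The plan is to reconstruct item (3) of Definition \ref{def:14} from the remaining data. By Proposition \ref{prop:olddefswe} an object of $\fmap \times_{\catc^\btwo} \wmap$ lying over a map $f \colon X \to Y$ amounts to an $F$-algebra structure on $f$ together with an $F^t$-algebra structure on $F f$, and from this I must produce, functorially, an $F^t$-algebra structure on $f$ itself. Write $X \xrightarrow{C^t f} K^t f \xrightarrow{F f} Y$ and $X \xrightarrow{C f} K f \xrightarrow{F^t f} Y$ for the two factorisations of $f$, where $K^t f$ and $K f$ denote the respective middle objects. Since $(C, F^t)$ is cofibrantly generated, \cite[Lemma 2.30]{riehlams} lets me upgrade, functorially, an algebra structure for the underlying pointed endofunctor of $F^t$ to a genuine $F^t$-monad-algebra structure; so it suffices to produce the former. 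Unwinding the definition, a pointed-endofunctor $F^t$-algebra structure on $f$ is exactly a diagonal filler $s \colon K f \to X$ for the canonical square below, that is, a map with $s \circ C f = \id_X$ and $f \circ s = F^t f$:
\begin{equation*}
  \begin{gathered}
    \xymatrix{ X \ar[r]^= \ar[d]_{C f} & X \ar[d]^f \\
      K f \ar[r]_{F^t f} & Y }
  \end{gathered}
\end{equation*}

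To build such an $s$, I would first note that the square
\begin{equation*}
  \begin{gathered}
    \xymatrix{ X \ar[r]^{C^t f} \ar[d]_{C f} & K^t f \ar[d]^{F f} \\
      K f \ar[r]_{F^t f} & Y }
  \end{gathered}
\end{equation*}
commutes, both composites being equal to $f$. Its left edge $C f$ carries the canonical $C$-coalgebra structure coming from the comonad, and its right edge $F f$ carries the $F^t$-algebra structure supplied by the $\wmap$-datum, so the lifting operation of the awfs $(C, F^t)$ yields a canonical diagonal $\phi \colon K f \to K^t f$ with $\phi \circ C f = C^t f$ and $F f \circ \phi = F^t f$. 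The $F$-algebra structure on $f$ gives in turn, via its underlying pointed-endofunctor part, a map $q \colon K^t f \to X$ with $q \circ C^t f = \id_X$ and $f \circ q = F f$. Setting $s := q \circ \phi$ then yields $s \circ C f = q \circ C^t f = \id_X$ and $f \circ s = F f \circ \phi = F^t f$, which is the desired filler. Note that the comparison map $\xi$ plays no explicit role here: the crucial square commutes for the trivial reason that both legs compose to $f$.

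For functoriality I would argue that each ingredient is natural. The diagonal $\phi$ is the value of the canonical lifting operation of $(C, F^t)$ on the $C$-coalgebra $C f$ and the $F^t$-algebra $F f$, and this operation is natural in morphisms of coalgebras and of algebras; as $C f$ depends functorially on $f$ and the pair consisting of $F f$ with its $F^t$-algebra structure is precisely the image of the $\wmap$-object, $\phi$ is functorial on $\fmap \times_{\catc^\btwo} \wmap$. Likewise $q$ is functorial in the $F$-algebra structure, so $s = q \circ \phi$ is too, and the final upgrade via \cite[Lemma 2.30]{riehlams} is functorial by construction, giving the required functor $\fmap \times_{\catc^\btwo} \wmap \to \rtmap$. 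I expect the only real obstacle to be this bookkeeping — confirming that the pointed-endofunctor characterisation identifies the filler with $s$, and that the canonical lift, the $F$-algebra filler, and the cofibrant-generation upgrade are each natural — rather than the short lifting argument itself.
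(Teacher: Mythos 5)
Your proof is correct and follows essentially the same route as the paper's: both produce a pointed-endofunctor $F^t$-algebra structure on $f$ by transferring the given structure on $F f$ along the retraction $q$ supplied by the $F$-algebra structure, and then invoke cofibrant generation (Riehl's Lemma 2.30, as in the earlier proposition) to correct it to a monad algebra, functorially. The only difference is in how the transfer is written: the paper appeals in one line to the closure of pointed-endofunctor algebras under retracts, whereas you construct the filler $s = q \circ \phi$ explicitly by lifting the comonad coalgebra $C f$ against the algebra $F f$ --- a minor variation that uses the comultiplication where the paper's retract argument does not need it, but is equally valid and natural.
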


\begin{proof}
  Note that an $F$-algebra structure on a map $f$ witnesses it as the
  retract of $F f$. Hence, if we are given an $F^t$-algebra structure
  on $F f$, we can assign $f$ also the structure of an
  $F^t$-algebra. Since we defined this using a retract, in general it
  might be only a pointed endofunctor algebra. However, since $(C,
  F^t)$ is cofibrantly generated, we can correct this to obtain an
  algebra structure over the monad.
\end{proof}

\subsection{Constructing Very Good Path Objects from Path Objects}
\label{sec:strong-path-objects}

We now use an ams with structured weak equivalences to define a weaker
version of very good path objects (definition \ref{def:strpathobj})
where we replace the requirement of $r_f$ being a trivial cofibration
to just a weak equivalence. We will then show how to use this to
construct very good path objects.

\begin{definition}
  \begin{enumerate}
  \item A \emph{choice of  path objects} consists of an
    assignment to every $F$-map $f : X \rightarrow Y$ a factorisation
    \begin{equation}
      \label{eq:7b}
      X \stackrel{r_f}{\rightarrow} P(f) \stackrel{p_f}{\rightarrow} X
      \times_Y X
    \end{equation}
    of the diagonal $\Delta : X \rightarrow X \times_Y X$ together
    with a weak equivalence structure on $r_f$ and an $F$-algebra
    structure on $p_f$.
  \item A choice of  path objects is \emph{functorial} if
    the assignment of \eqref{eq:7b} provides the action of objects of a
    functor $\fmap \rightarrow \fmap \times_{\catc} \wmap$.
  \item A choice of  path objects is \emph{stable} when
    every map of $F$-algebras whose underlying square is a pullback
    makes the following square given by functoriality a pullback
    \begin{equation*}
      \begin{gathered}
        \xymatrix{P(f) \ar[r]^{P(h,k)} \ar[d]_{p_f} & P(f')
          \ar[d]_{p_{f'}} \\
          X \times_Y X \ar[r] & X' \times_{Y'} X'}
      \end{gathered}
    \end{equation*}
  \end{enumerate}
\end{definition}

\begin{definition}
  We say an awfs $(L, R)$ is \emph{pullback stable}, if for every
  pullback square of the form below,
  \begin{equation*}
    \xymatrix{ U \pullbackcorner \ar[r] \ar[d]_f & X \ar[d]^g \\
      V \ar[r] & Y}
  \end{equation*}
  the square below given by functoriality is also a pullback.
  \begin{equation*}
    \xymatrix{ \cdot \ar[r] \ar[d]_{R f} & \cdot
      \ar[d]^{R g} \\
      V \ar[r] & Y}    
  \end{equation*}
\end{definition}

\begin{theorem}
  \label{thm:strfromwk}
  Suppose we are given an ams with structured weak equivalences $\xi
  \colon (C^t, F) \to (C, F^t)$ where $(C, F^t)$ is pullback stable.

  Then given a stable functorial choice of path objects we can
  construct a stable functorial choice of very good path objects.
\end{theorem}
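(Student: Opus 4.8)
The plan is to carry out functorially the trick described in the introduction, keeping careful track of all the structure supplied by the ams with structured weak equivalences. Given an $F$-map $f \colon X \to Y$, the stable functorial choice of path objects provides a factorisation $X \stackrel{r_f}{\to} P(f) \stackrel{p_f}{\to} X \times_Y X$ of $\Delta_f$, with a weak equivalence structure on $r_f$ and an $F$-algebra structure on $p_f$. I would factor $r_f$ using the awfs $(C, F^t)$, writing $X \stackrel{C r_f}{\to} K(r_f) \stackrel{F^t r_f}{\to} P(f)$, and then set $r'_f := C r_f$ and $p'_f := p_f \circ F^t r_f$. Since $p'_f \circ r'_f = p_f \circ F^t r_f \circ C r_f = p_f \circ r_f = \Delta_f$, this is again a factorisation of the diagonal, and it is the candidate very good path object.

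Next I would supply the two structures. To equip $r'_f = C r_f$ with a $C^t$-coalgebra structure, note that $F^t r_f$ is canonically an $F^t$-algebra, so condition (5) of Definition \ref{def:14} (the functor $\rtmap \to \wmap$) gives it a weak equivalence structure. As $r_f = F^t r_f \circ C r_f$ and both $r_f$ and $F^t r_f$ now carry weak equivalence structures, the functorial $3$-for-$2$ operator of Definition \ref{def:13} assigns a weak equivalence structure to $C r_f$; combined with its canonical $C$-coalgebra structure, condition (2) of Definition \ref{def:14} (the functor $\cmap \times_{\catc^\btwo} \wmap \to \ctmap$) produces the desired $C^t$-coalgebra structure on $r'_f$. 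For $p'_f$ I would use the comparison map $\xi$, which turns the $F^t$-algebra $F^t r_f$ into an $F$-algebra; since the algebras over the monad of an awfs are closed under composition, the composite $p'_f = p_f \circ F^t r_f$ inherits an $F$-algebra structure. Each step is the action on objects of a functor (the functorial factorisation for $(C, F^t)$, the functorial $3$-for-$2$ operator, the functors of Definition \ref{def:14}, and composition in $\fmap$), so the whole assignment is the action on objects of a functor $\fmap \to \fmap \times_{\catc} \ctmap$, giving functoriality.

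The substantial part is stability. Let a map of $F$-algebras $f \to f'$ have underlying square a pullback. I would first record the general fact that a pullback square between $f$ and $f'$ induces a pullback square between the diagonals, i.e.\ the square with top $X \to X'$, bottom $X \times_Y X \to X' \times_{Y'} X'$ and verticals $\Delta_f, \Delta_{f'}$ is a pullback. Stability of the given path objects says the square with verticals $p_f, p_{f'}$ is a pullback. In the stacked diagram
\[
  \begin{gathered}
    \xymatrix{
      X \ar[r] \ar[d]_{r_f} & X' \ar[d]^{r_{f'}} \\
      P(f) \ar[r] \ar[d]_{p_f} & P(f') \ar[d]^{p_{f'}} \\
      X \times_Y X \ar[r] & X' \times_{Y'} X'
    }
  \end{gathered}
\]
the outer rectangle is the (pullback) diagonal square and the lower square is the (pullback) $P$-square, so by the pasting lemma the upper square, which is the underlying square of the morphism of arrows $r_f \to r_{f'}$, is a pullback. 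Now the pullback stability hypothesis on $(C, F^t)$ applies to $r_f \to r_{f'}$, yielding that the induced square on $F^t$-factors, with verticals $F^t r_f, F^t r_{f'}$ and bottom $P(f) \to P(f')$, is a pullback. Pasting this with the $P$-square gives that the square with verticals $p'_f = p_f \circ F^t r_f$ is a pullback, which is exactly the stability condition for the very good path objects.

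I expect the main obstacle to be this stability argument, and in particular the recognition that pullback stability of $(C, F^t)$ must be applied to the morphism of arrows $r_f \to r_{f'}$ rather than to $f \to f'$ directly. This forces one first to establish that the underlying square of $r_f \to r_{f'}$ is a pullback, which is not immediate: it relies on the observation that a pullback of fibrations induces a pullback on diagonals, together with two applications of the pasting lemma. By comparison, the construction of the $C^t$-coalgebra and $F$-algebra structures and their functoriality is a fairly routine consequence of the axioms of Definition \ref{def:14} and standard facts about awfs's, the only delicate point being the passage through the comparison map and the closure of $F$-algebras under composition.
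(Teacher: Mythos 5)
Your proposal is correct and follows essentially the same route as the paper: the same factorisation $C r_f$, $p_f \circ F^t r_f$, the same use of the functor $\rtmap \to \wmap$, the functorial $3$-for-$2$ operator, the functor $\cmap \times_{\catc^\btwo} \wmap \to \ctmap$, the comparison map, and the same stability argument hinging on applying pullback stability of $(C, F^t)$ to the morphism $r_f \to r_{f'}$ after showing its underlying square is a pullback. The only (harmless) difference is bookkeeping: where you invoke the general fact that a pullback square induces a pullback square on diagonals, the paper instead proves directly via a cube argument that $X \times_Y X \to X' \times_{Y'} X'$ is a pullback over $Y \to Y'$ and then cancels in a four-row stack --- the two are equivalent up to one application of the pasting lemma.
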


\begin{proof}
  Suppose that we are given a choice of  path objects. That is, we
  are given for each fibration $f$ a factorisation
  \begin{equation*}
    \begin{gathered}
      \xymatrix{ X \ar[rr]^\Delta \ar[dr]_{r_f} & & X \times_Y X \\
        & P f \ar[ur]_{p_f} & }
    \end{gathered}
  \end{equation*}
  together with weak equivalence on $r_f$ and $R$-algebra structure on
  $p$. Then we apply the $(C, F^t)$ factorisation to $r_f$ to extend the
  diagram as follows.
  \begin{equation*}
    \begin{gathered}
      \xymatrix{ X \ar[rr]^\Delta \ar[dr]_{r_f} \ar[d]_{C r_f} & & X \times_Y X \\
        M r_f \ar[r]_{F^t r_f} & P f \ar[ur]_{p_f} & }
    \end{gathered}
  \end{equation*}
  Then we may use the functorial 3-for-2 operator to construct a weak
  equivalence structure on $C r_f$ from the weak equivalence
  structures on $r_f$ and $F^t r_f$. Since $C r$ is a cofibration, we
  can construct from this a $C^t$-coalgebra structure on $C r$.
  Furthermore we
  can produce an $R$-algebra structure on $F^t r$ using the comparison
  map, and then assign an $F$-algebra structure to $p_f \circ F^t r_f$
  by composing the structures on $F^t r_f$ and $p_f$.

  All of the above constructions can be done functorially, so the
  following is a functorial choice of very good path objects.
  \begin{equation*}
    \begin{gathered}
      \xymatrix{ X \ar[rr]^\Delta \ar[dr]_{C r_f} & & X \times_Y X \\
        & M r_f \ar[ur]_{p_f \circ F^t r_f} & }
    \end{gathered}
  \end{equation*}

  Now we just need to check that this construction satisfies
  stability.

  Assume that we are given a pullback square as below.
  \begin{equation}
    \label{eq:12}
    \begin{gathered}
      \xymatrix{ X \pullbackcorner \ar[r] \ar[d]^f & X' \ar[d]^{f'} \\
        Y \ar[r] & Y'}
    \end{gathered}
  \end{equation}

  We first check that the square below is a pullback.
  \begin{equation}
    \label{eq:29}
    \begin{gathered}
      \xymatrix{ X \times_Y X \ar[r] \ar[d] & X' \times_{Y'} X' \ar[d] \\
        Y \ar[r] & Y'}
    \end{gathered}
  \end{equation}
  To this end, consider the following commutative cube.
  \begin{equation*}
    \begin{gathered}
      \xymatrix@=1.3em{ & X \times_Y X \ar[rr] \ar '[d] [dd] \ar[dl] & & X'
        \times_{Y'} X' \ar[dl] \ar[dd] \\
        X \ar[rr] \ar[dd] & & X' \ar[dd] & \\
        & X \ar'[r][rr] \ar[dl] & & X' \ar[dl] \\
        Y \ar[rr] & & Y' & }
    \end{gathered}
  \end{equation*}
  The front face is a pullback by our assumption and the left and
  right faces are pullbacks by definition. Hence the back face is also
  a pullback. But the bottom face is a pullback once again by our
  assumption. Hence the composition of the back face and bottom face
  is a pullback, but this is precisely \eqref{eq:29}, as required.

  Now consider the diagram
  \begin{equation}
    \label{eq:31}
    \begin{gathered}
      \xymatrix{ X \ar[r] \ar[d] & X' \ar[d] \\
        P f \ar[r] \ar[d] & P f' \ar[d] \\
        X \times_Y X \ar[r] \ar[d] & X' \times_{Y'} X' \ar[d] \\
        Y \ar[r] & Y'}
    \end{gathered}
  \end{equation}
  We have just checked that the lower square is a pullback. The middle
  square is also a pullback since we assumed $P$ is stable. The entire
  rectangle is also a pullback by assumption (it is precisely
  \eqref{eq:12}). We deduce that the upper square is also a pullback.
  
  Finally consider the following diagram.
  \begin{equation}
    \label{eq:32}
    \begin{gathered}
      \xymatrix{ M r_f \ar[r] \ar[d] \ar[d]^{R^t r_f} & M r_{f'}
        \ar[d]^{R^t r_{f'}} \\
        P f \ar[r] \ar[d]^{p_f} \ar[r] & P f' \ar[d]^{p_{f'}} \\
        X \times_Y X \ar[r] & X' \times_{Y'} X'}
    \end{gathered}
  \end{equation}
  Since the upper square in \eqref{eq:31} is a pullback and $(C, F^t)$
  preserves pullbacks, we deduce that the upper square in
  \eqref{eq:32} is a pullback. The lower square is also a pullback by
  the assumption that $P$ is stable. Hence the entire rectangle is a
  pullback. But this is precisely what we need to show that the very good
  path objects we defined before are stable.
\end{proof}

\section{Some Sufficient Conditions for the Existence of Identity
  Types}
\label{sec:some-suff-cond}

In this section we give a direct proof that identity types can be
constructed in certain categories. We will follow the same
construction as in section \ref{sec:strong-path-objects}. Instead of
an ams, we only work with a pre-ams satisfying certain axioms, based
on those considered by Gambino and Sattler in
\cite{gambinosattlerpi}. These axioms are much easier to show for the
examples we consider than the construction of a $3$-for-$2$ operator.

We note that throughout this section, part of the work lies in adapting
arguments based on wfs's to stronger functorial versions. Under
suitable conditions it is possible to save work by doing this
automatically using pointwise awfs's or (as the author will show in a
future paper) category indexed family fibrations. However, for now we
work directly with the functorial versions, to give a clearer picture
of the objects and maps involved in the construction. Since all the
constructions involved are fairly simple, this does not cause too much
difficulty.

We first review some necessary background material.

\subsection{The Leibniz Construction}
\label{sec:leibniz-construction}

The Leibniz construction is a well known construction in homotopical
algebra. See e.g. \cite[Construction 11.1.7]{riehlcht} for a standard
reference. It was first applied to the semantics of homotopy type
theory, and in particular CCHM cubical sets by Gambino and Sattler in
\cite{gambinosattlerpi}. The idea is that given a monoidal category,
$(\catc, \otimes)$, we can give $\catc^\btwo$ also the structure of a
monoidal category using pushout product $\hat \otimes$. Furthermore,
if we are given right adjoints to $- \otimes X$ for each $X$ in
$\catc$, we can produce also right adjoints to $- \hat \otimes f$ for
each $f$ in $\catc^\btwo$ using pullback hom.

\begin{definition}
  Let $(\catc, \otimes)$ be a monoidal category with pushouts. The
  \emph{pushout product} is the monoidal product $\hat{\otimes}$
  defined on $\catc^\btwo$ as follows. Given $f \colon U \rightarrow
  V$ and $g \colon X \rightarrow Y$, we define $f \hat{\otimes} g$ as
  the map given by the universal property of the pushout below.
  \begin{equation*}
    \xymatrix{ U \otimes X \ar[r]^{f \otimes X} \ar[d]_{U \otimes g} &
      V \otimes X \ar[d] \ar@/^/[ddr]^{V \otimes g} & \\
      U \otimes Y \ar[r] \ar@/_/[drr]_{f \otimes Y}
      & \cdot \pushoutcorner \ar[dr]|{f \hat \otimes g} & \\
      & & V \otimes Y}
  \end{equation*}
\end{definition}

\begin{definition}
  Let $(\catc, \otimes)$ be a monoidal category with pushouts and
  pullbacks. Suppose that for each $X$, $- \otimes X$ has a right
  adjoint $\hom(X, - )$. Then for each
  map $f$, $f \hat \otimes -$ has a right adjoint,
  $\monexpl{f}{-}$ referred to as \emph{pullback hom}, which
  is defined explicitly as the map given by the universal property of
  the pullback below. Let $f \colon U \rightarrow V$ and
  $g \colon X \rightarrow Y$.
  \begin{equation*}
    \xymatrix{ \hom(V, X) \ar[dr]|{\monexpl{f}{g}}
      \ar@/^/[drr]^{\hom(V, g)} \ar@/_/[ddr]_{\hom(f, X)} & & \\
      & \cdot \pullbackcorner \ar[r] \ar[d] & \hom(V, Y) \ar[d]^{\hom(f, Y)} \\
      & \hom(U,X) \ar[r]_{\hom(U, g)} & \hom(U,Y)
    }
  \end{equation*}
\end{definition}

\subsection{The Conditions}
\label{sec:conditions}

\begin{definition}
  A monoidal product $\otimes$ is \emph{affine} if the unit of the
  monoidal product is a terminal object.
\end{definition}

\begin{remark}
  For any affine monoidal product, we can define projection maps $X
  \otimes Y \rightarrow X$ and $X \otimes Y \rightarrow Y$.
\end{remark}

Our basic set up is a monoidal category $(\catc, \otimes)$ satisfying
the following conditions.
\begin{enumerate}
\item $\catc$ is finitely complete and finitely cocomplete.
\item $\otimes$ is an affine and symmetric monoidal product that
  preserves colimits.
\item $\delta_i \colon 1 \rightarrow \intv$ for $i = 0, 1$ is an interval
  object.
\item $- \otimes \intv$ has a right adjoint, which we denote $P$.
\item $(C, F^t)$ is pullback stable.
\item Axioms \ref{ax:intvbdrylpcof}, \ref{ax:fibtoltrivfib} and
  \ref{ax:fibtobdyfib} below.
\end{enumerate}

\begin{axiom}
  \label{ax:intvbdrylpcof}
  If we are given a cofibration $m$, then we can also give
  $m \hat{\otimes} [\delta_0, \delta_1]$ the structure of a
  cofibration, and this assignment is functorial in $m$.
\end{axiom}


Note that pullback hom might not be defined in general, but we do know
that $- \hat \otimes \delta_i$ has a right adjoint given by
$\monexpl{\delta_i}{-}$ using $P$.

\begin{axiom}
  \label{ax:fibtoltrivfib}
  Given an $F$ algebra structure on a map $f$, we can define in a
  functorial way, an $F^t$ algebra structure on
  $\monexpl{\delta_i}{f}$ for $i = 0, 1$.
\end{axiom}

\begin{axiom}
  \label{ax:fibtobdyfib}
  Given an $F$ algebra structure on a map $f$, we can define in a
  functorial way, an $F$ algebra structure on
  $\monexpl{[\delta_0, \delta_1]}{f}$.
\end{axiom}


\subsection{Some Useful Propositions}
\label{sec:some-usef-prop}

Before giving some examples, we prove a couple of propositions that
will be useful for verifying the examples do satisfy the axioms, and
later for the theorem itself.

\begin{proposition}
  \label{prop:fibtoltrivfibdual}
  Suppose that a pre-ams $\xi \colon (C^t, F) \rightarrow (C, F^t)$
  satisfies axiom \ref{ax:fibtoltrivfib}. Then given a cofibration
  $m$, we can assign $m \hat{\otimes} \delta_i$ the structure of a
  trivial cofibration.
\end{proposition}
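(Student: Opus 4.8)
The plan is to exploit the adjunction $(-)\poprod\delta_i \dashv \monexpl{\delta_i}{-}$ (available by the assumed right adjoint $P$, even though pullback hom need not exist for general maps) together with the standard characterisation of trivial cofibration structure: for the awfs $(C^t,F)$, equipping a map with a $C^t$-coalgebra structure over the copointed endofunctor is the same as solving a single canonical lifting problem of that map against its own $F$-factor. Concretely, I would write $g := F(m \poprod \delta_i)$ for the right factor of $m \poprod \delta_i$ under the factorisation supplied by $(C^t,F)$, carrying its canonical $F$-algebra structure. A $C^t$-coalgebra structure on $m \poprod \delta_i$ is then precisely a diagonal filler $b$ in the square whose top edge is $C^t(m\poprod\delta_i)$, whose left and right sides are $m\poprod\delta_i$ and $g$, and whose bottom edge is the identity.

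Next I would transpose this lifting problem across the adjunction. A diagonal filler for $m\poprod\delta_i$ against $g$ corresponds, under $(-)\poprod\delta_i \dashv \monexpl{\delta_i}{-}$, to a diagonal filler for $m$ against $\monexpl{\delta_i}{g}$; this is the usual compatibility of the pushout-product/pullback-hom adjunction with lifting problems and their solutions. Now I apply Axiom \ref{ax:fibtoltrivfib} to the $F$-algebra $g$, which equips $\monexpl{\delta_i}{g}$ with an $F^t$-algebra structure. Since $m$ already carries its $C$-coalgebra structure and $\monexpl{\delta_i}{g}$ is an $F^t$-algebra, the awfs $(C,F^t)$ furnishes a canonical solution to the transposed lifting problem. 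Transporting that solution back across the adjunction yields the filler $b$, and hence the sought $C^t$-coalgebra structure on $m\poprod\delta_i$.

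For functoriality I would note that each ingredient is functorial in $m$: the assignments $m \mapsto m\poprod\delta_i$ and then $m \mapsto g = F(m\poprod\delta_i)$ give a functor $\cmap \to \fmap$ into $F$-algebras; Axiom \ref{ax:fibtoltrivfib} is functorial, so $m \mapsto \monexpl{\delta_i}{g}$ lands in $\rtmap$; the canonical lift in $(C,F^t)$ is functorial in the pair consisting of the $C$-coalgebra $m$ and the $F^t$-algebra $\monexpl{\delta_i}{g}$; and transposition across the adjunction is natural. Composing these produces the required functor $\cmap \to \ctmap$ lying over $m \mapsto m\poprod\delta_i$.

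The step I expect to be the main obstacle is the careful verification that transposing the canonical $(C,F^t)$-filler really does solve the specific canonical square defining the $C^t$-coalgebra law, i.e.\ checking that the adjunction matches up the two commuting triangles on each side so that the transported filler satisfies $b\circ(m\poprod\delta_i) = C^t(m\poprod\delta_i)$ and $g\circ b = \mathrm{id}$. This is routine diagram-chasing once the calculus of the two-variable adjunction for $\poprod$ and $\monexpl{\cdot}{\cdot}$ is in place, but it is where the real content sits; everything else is naturality bookkeeping. Note also that the comparison map $\xi$ of the pre-ams plays no role in this argument.
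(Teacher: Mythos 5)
Your proof is correct and takes exactly the route the paper intends: the paper's own proof is the one-line remark that the result ``follows from the adjunction between pushout product and pullback hom,'' and your write-up is precisely that argument spelled out --- the copointed-endofunctor $C^t$-coalgebra structure as the canonical lift of $m \poprod \delta_i$ against $F(m \poprod \delta_i)$, transposed across $(-) \poprod \delta_i \dashv \monexpl{\delta_i}{-}$, and solved using Axiom \ref{ax:fibtoltrivfib} together with the canonical lifting of a $C$-coalgebra against an $F^t$-algebra, all functorially. Your side remarks (that only the adjoint $\monexpl{\delta_i}{-}$ coming from $P$ is needed, not a general pullback hom, and that the comparison map $\xi$ plays no role) are also accurate.
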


\begin{proof}
  This follows from the adjunction between pushout product and
  pullback hom.
\end{proof}

As remarked in section \ref{sec:introduction}, when working with
cofibrations, we usually use the category $\cmap$ of copointed
endofunctor coalgebras. However, in order to construct the comparison
map it is useful to instead work over the category $\cMap$ of comonad
coalgebras. Because of this, we will aim towards a lemma constructing
a functor $\cmap \to \cMap$ over $\catc^\btwo$ using the assumption
that $(C, F^t)$ is pullback stable.

\begin{lemma}
  \label{lem:retractsarepbs}
  ``Every retract of a monomorphism is a pullback.'' More formally,
  suppose we are given diagram as below, where $k \circ h = 1_X$ and
  $m \circ l = 1_Y$, and that $g$ (and so also $f$) is a
  monomorphism.
  \begin{equation}
    \label{eq:3}
    \begin{gathered}
      \xymatrix{ X \ar@{>->}[d]_f \ar[r]^h & Z \ar@{>->}[d]^g \ar[r]^k & X
        \ar@{>->}[d]^f \\
        Y \ar[r]_l & W \ar[r]_m & Y}
    \end{gathered}
  \end{equation}

  Then the left hand square in \eqref{eq:3} is a pullback.
\end{lemma}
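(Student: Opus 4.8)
Show that in the diagram
\begin{equation*}
  \begin{gathered}
    \xymatrix{ X \ar@{>->}[d]_f \ar[r]^h & Z \ar@{>->}[d]^g \ar[r]^k & X
      \ar@{>->}[d]^f \\
      Y \ar[r]_l & W \ar[r]_m & Y}
  \end{gathered}
\end{equation*}
with $k \circ h = 1_X$, $m \circ l = 1_Y$, and $g$ a monomorphism, the left square is a pullback.

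The plan is to verify the universal property of the pullback directly, exploiting the fact that $g$ is a monomorphism to get uniqueness for free. So the real content is existence of the mediating map. Suppose I am given an object $T$ with maps $a \colon T \to Y$ and $b \colon T \to Z$ satisfying $g \circ b = l \circ a$. I need to produce a unique $u \colon T \to X$ with $f \circ u = a$ and $h \circ u = b$.

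My candidate for the mediating map is $u := k \circ b \colon T \to X$. The verification splits into two checks. First, $f \circ u = a$: I would compute $f \circ k \circ b$ and use the commutativity of the right square ($f \circ k = m \circ g$) to rewrite this as $m \circ g \circ b = m \circ l \circ a = a$, using the compatibility hypothesis $g \circ b = l \circ a$ and the retraction identity $m \circ l = 1_Y$. Second, $h \circ u = b$: here I would compute $h \circ k \circ b$ and aim to show it equals $b$. This is where I expect the main obstacle to lie, since $h \circ k$ need not be an identity on $Z$. The trick is to use that $g$ is a monomorphism: rather than showing $h \circ k \circ b = b$ directly, I show $g \circ h \circ k \circ b = g \circ b$ and then cancel $g$. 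Using $g \circ h = l \circ f$ (commutativity of the left square) and the already-established $f \circ k \circ b = a$, I get $g \circ h \circ k \circ b = l \circ f \circ k \circ b = l \circ a = g \circ b$, and cancelling the mono $g$ yields $h \circ k \circ b = b$ as desired.

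Finally, uniqueness of $u$ is immediate: if $u'$ also satisfies $f \circ u' = a$ and $h \circ u' = b$, then in particular $h \circ u' = b = h \circ u$, and since $h = h$ need not be mono I instead note that from $f \circ u' = a = f \circ u$ and $f$ a monomorphism (which follows from $g$ being one, as $f$ is a retract of $g$ and retracts of monos are monos) we conclude $u' = u$. Thus the left square satisfies the universal property of a pullback. The whole argument is a short diagram chase; the only subtlety worth flagging is the repeated use of the mono hypothesis to cancel, which is what makes the retract data assemble into a genuine pullback square.
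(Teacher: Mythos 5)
Your proof is correct and takes essentially the same route as the paper: the paper likewise defines the mediating map as $k$ composed with the given map into $Z$ and gets uniqueness from $f$ being monic, leaving as ``straightforward'' the commutativity checks that you spell out (including the cancellation of the mono $g$ to verify $h \circ k \circ b = b$).
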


\begin{proof}
  Suppose we are given a commutative diagram as in the solid lines
  below. We need to show there is a unique map $t$ as in the dotted
  line below, making the diagram commute.
  \begin{equation*}
    \xymatrix{ A \ar@/^/[drr]^p \ar@/_/[ddr]_q \ar@{.>}[dr]|t & & \\
      & X \ar[r]^h \ar@{>->}[d]_f & Z \ar@{>->}[d]^g \\
      & Y \ar[r]_l & W}
  \end{equation*}

  Note however that uniqueness easily follows from the fact that $f$
  is monic. Hence we only have to show existence.

  We take $t$ to be $k \circ p$. It is straightforward to verify that
  the resulting diagram commutes.
\end{proof}

\begin{proposition}
  \label{prop:fixcofibs}
  Suppose that $(C, F^t)$ is pullback stable and that every
  cofibration is a monomorphism. Then there is a functor
  that takes a copointed endofunctor coalgebra structure on a map $f$,
  and returns a comonad coalgebra structure on the same map. That is,
  we construct a functor $\cmap \to \cMap$ over $\catc^\btwo$.
\end{proposition}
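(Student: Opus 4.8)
The plan is to read a copointed-endofunctor $C$-coalgebra structure on $f$ as exhibiting $f$ as a pullback of the canonical comonad coalgebra $Cf$, and then to transport the comonad structure of $Cf$ back onto $f$ along this pullback, using pullback stability of $(C, F^t)$.

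First I would unfold the data. Write the $(C, F^t)$-factorisation of $f \colon X \to Y$ as $X \xrightarrow{Cf} Mf \xrightarrow{F^t f} Y$ and let $\epsilon$ denote the counit of the copointed endofunctor underlying $C$. A coalgebra structure for this copointed endofunctor is a morphism $\gamma = (1_X, s) \colon f \to Cf$ of $\catc^\btwo$ with $\epsilon_f \circ \gamma = 1_f$; equivalently it is a section $s \colon Y \to Mf$ of $F^t f$ satisfying $s \circ f = Cf$. In particular $\gamma$ together with $\epsilon_f$ exhibits $f$ as a retract of $Cf$ whose domain legs are identities:
\begin{equation*}
  \begin{gathered}
    \xymatrix{ X \ar[r]^{1_X} \ar[d]_f & X \ar[r]^{1_X} \ar[d]|{Cf} & X \ar[d]^f \\
      Y \ar[r]_s & Mf \ar[r]_{F^t f} & Y }
  \end{gathered}
\end{equation*}
Both $f$ and $Cf$ are cofibrations --- $f$ because it admits a copointed $C$-coalgebra structure, and $Cf$ because it admits the canonical comonad structure --- hence both are monomorphisms by hypothesis. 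Lemma \ref{lem:retractsarepbs} then applies to the displayed retract and shows that its left-hand square, namely the underlying square of $\gamma$, is a pullback.

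Now I would invoke pullback stability. The object $Cf$ carries its canonical comonad coalgebra structure $\Sigma_f \colon Cf \to C(Cf)$, with codomain component a section $\sigma_f \colon Mf \to M(Cf)$ of $F^t(Cf)$. Applying the right map $F^t$ of the awfs $(C, F^t)$ to the pullback square underlying $\gamma$, pullback stability gives that
\begin{equation*}
  \begin{gathered}
    \xymatrix{ Mf \ar[r]^{M\gamma} \ar[d]_{F^t f} & M(Cf) \ar[d]^{F^t(Cf)} \\
      Y \ar[r]_s & Mf }
  \end{gathered}
\end{equation*}
is again a pullback; since $s$ is a split monomorphism, its pullback $M\gamma$ is monic. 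I would then define the candidate structure on $f$ to be the unique section $\bar s \colon Y \to Mf$ of $F^t f$ with $M\gamma \circ \bar s = \sigma_f \circ s$, obtained from the pair $(1_Y, \sigma_f \circ s)$ by the universal property of this pullback (the two legs agree because $\sigma_f$ is a section of $F^t(Cf)$). Conceptually, $\bar s$ is the restriction of the \emph{coherent} structure $\sigma_f$ on $Cf$ along $s$, rather than the given, possibly incoherent, section $s$ itself.

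It remains to verify that $\bar\gamma := (1_X, \bar s)$ is a genuine comonad coalgebra structure --- the counit laws follow readily, e.g.\ $\bar s \circ f = Cf$ is forced by monicity of $M\gamma$ --- and that the whole assignment is functorial in $f$, giving a functor $\cmap \to \cMap$ which is over $\catc^\btwo$ since $f$ is unchanged. The main obstacle is the coassociativity law for $\bar\gamma$, which is precisely the step that upgrades a copointed coalgebra to a genuine comonad coalgebra and so cannot be skipped. Here I expect to apply pullback stability a second time: the underlying square of the morphism $C\gamma \colon Cf \to C(Cf)$ is itself a pullback (this uses exactly that $M\gamma$ is monic), so stability makes $M(C\gamma)$ monic as well, and postcomposing the coassociativity equation with this monomorphism reduces it to the comonad laws already satisfied by the canonical structure $\sigma_f$ on $Cf$. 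Functoriality is then routine: a morphism of copointed coalgebras induces a morphism of the associated pullback data, and monicity of the relevant comparison maps forces the induced sections to be compatible.
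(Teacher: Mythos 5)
Your proposal is correct and takes essentially the same route as the paper: use Lemma \ref{lem:retractsarepbs} to upgrade the retract diagram given by the copointed coalgebra structure to a pullback of $f$ along $C f$ (which is monic, being a cofibration), and then transport the canonical comonad coalgebra structure on $C f$ back to $f$ via pullback stability. Your explicit definition of $\bar s$ by the universal property and the coassociativity check via a second application of stability are simply careful expansions of the step the paper compresses into ``we can pullback the $C$-coalgebra structure on $C f$,'' so there is no substantive difference in method.
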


\begin{proof}
  Suppose that $f$ has the structure of a coalgebra over the
  underlying copointed endofunctor of $(C, F^t)$. The coalgebra
  structure witnesses $f$ as a retract of $C f$. Hence by lemma
  \ref{lem:retractsarepbs} the same diagram witnesses $f$ as a
  pullback of $C f$. Since $(C, F^t)$ is pullback stable we can
  pullback the $C$-coalgebra structure on $C f$ to obtain a
  $C$-coalgebra structure on $f$.

  This is clearly functorial.
\end{proof}

\subsection{Examples}
\label{sec:examples}

\subsubsection{Gambino-Sattler Axioms}
\label{sec:gamb-sattl-axioms}

As stated above, these axioms are based on those of Gambino and
Sattler in \cite[Section 7]{gambinosattlerpi}.

We can recover a similar definition to Gambino and Sattler's as
follows. We add the requirement that $(C, F^t)$ is algebraically
cofibrantly generated, by a diagram $M \colon J \to \catc^\btwo$ that
we refer to as the \emph{generating cofibrations}. We then further
require that $(C^t, F)$ is cofibrantly generated by the coproduct of
the two diagrams of the form $M \poprod \delta_i$ for $i =
0,1$. Instead of assuming axioms \ref{ax:fibtoltrivfib} and
\ref{ax:fibtobdyfib}. We will keep assuming that axiom
\ref{ax:intvbdrylpcof} holds.

In this set up, the awfs $(C^t, F)$ is by definition cofibrantly
generated by the pushout product of generating cofibrations with the
endpoint inclusions $\delta_i$. Observe that using the adjunction
between pushout product and pullback exponential, this automatically
gives us axiom \ref{ax:fibtoltrivfib}. Furthermore, we also get axiom
\ref{ax:fibtobdyfib}. We again use the adjunction between pushout
product and pullback exponential to instead check the dual definition
for generating trivial cofibrations. Any generating trivial
cofibration is of the form $m \hat \otimes \delta_i$ where $m$ is a
generating cofibration. Then by symmetry of the monoidal product, we
have the following isomorphism.
\begin{equation}
  \label{eq:4}
  (m \poprod \delta_i) \poprod [\delta_0, \delta_1]
  \cong (m \poprod [\delta_0, \delta_1]) \poprod \delta_i
\end{equation}
Combining this with axiom
\ref{ax:intvbdrylpcof}, for each generating trivial cofibration $m
\poprod \delta_i$, we can assign $(m \poprod \delta_i) \poprod
[\delta_0, \delta_1]$ with the structure of a trivial
cofibration. Moreover, noting that \eqref{eq:4} is part of a natural
isomorphism and using the functorial part of \ref{ax:intvbdrylpcof},
we can ensure that a generating morphism of trivial cofibrations is
sent to a morphism of trivial cofibrations. This then gives us axiom
\ref{ax:fibtobdyfib}.

If we follow Gambino and Sattler in assuming that the generating
cofibrations are functorially closed under pushout product with
endpoint inclusion, one can construct the comparison map of the
pre-ams using Riehl's observation in \cite[Remark 3.6]{riehlams} that
it suffices to show that one can functorially assign the generating
trivial cofibrations with cofibration structures, and also using
proposition \ref{prop:fixcofibs}.

There are, however still a few minor differences with the
Gambino-Sattler axioms. We work with symmetric monoidal products and an
interval rather than functorial cylinders. Axiom
\ref{ax:intvbdrylpcof} does not seem to follow from \cite[Definition
7.1]{gambinosattlerpi}. Note however that it does hold in the examples
of simplicial sets and (CCHM) cubical sets appearing in \cite[Example
7.2]{gambinosattlerpi}. More generally, using the axiom that
$- \hat \otimes \delta_i$ preserves cofibrations, it follows from the
additional assumption that cofibrations are functorially closed under
binary union. This assumption was added, for instance by Sattler in
\cite[Definition 3.2]{sattlermodelstructures}, and also appears in the
Orton-Pitts axioms \cite{pittsortoncubtopos}.

\subsubsection{BCH Cubical Sets and $01$-Substitution Sets}
\label{sec:bch-cubical-sets}

BCH cubical sets were introduced by Bezem, Coquand and Huber in
\cite{bchcubicalsets}, and further developed by Huber in
\cite{huberthesis}. They were later still further developed by Bezem,
Coquand and Huber in \cite{bchunivalence}, where they showed how to
interpret the univalence axiom in the model. In \cite{pittsnompcs}
Pitts showed that this category of cubical sets is equivalent to a
category based on nominal sets, denoted $01$-substitution sets. Since
this paper is only concerned with structure that is preserved up to
isomorphism by equivalences of categories, we can freely switch back
and forth between the two presentations.

The monoidal product we use is \emph{separated product}. This is
naturally defined in $01$-substitution sets, where it has the same
definition as the separated product in nominal sets. The right adjoint
to $- \otimes \intv$ exists and can be explicitly described in terms
of name abstraction. See \cite[Section 2.4]{huberthesis} for a
detailed description by Huber.

In \cite[Section 7.5.3]{swanliftprob} the author showed that trivial
fibrations can be viewed as cofibrantly generated in two senses. They
can be viewed as cofibrantly generated in Garner's sense by boundary
inclusions together with a uniformity condition (which in loc. cit. is
referred to as \emph{cofibrantly generated with respect to the
  category indexed families fibration}). This is essentially the same
as the definition by Bezem, Coquand and Huber in
\cite{bchunivalence}. Trivial fibrations can also be viewed as
cofibrantly generated with respect to the codomain fibration. The
latter description can be used to show that the awfs $(C, F^t)$ of
cofibrations and trivial fibrations is stable under pullback if it
exists. In fact, using \cite[Corollary 7.5.5]{swanliftprob} we can
show that $(C, F^t)$ is \emph{strongly fibred} with respect to the
codomain fibration. This means that
it forms part of a fibred
awfs over the codomain fibration in which the restriction to each
fibre category is pullback stable.
This leaves the problem of actually constructing
the awfs $(C, F^t)$, which can be done using \cite[Theorem
6.14]{swanwtypered}, together with the observation that the generating
cofibrations are locally decidable, or by using Garner's small object
argument \cite{garnersmallobject}. The awfs can also be constructed
directly in a similar manner to \cite{swannomawfs},
followed by direct verification of pullback stability. See
\cite{bchunivalence} for such a construction (or
\cite{swanidamsdraft}). Note that $n$-dimensional boundary inclusions
can be defined as the pushout product of $n$ copies of
$[\delta_0, \delta_1]$. It follows that the generating cofibrations
are closed under pushout product with $[\delta_0, \delta_1]$, which
then ensures that axiom \ref{ax:intvbdrylpcof} is satisfied.

We define the interval object to be the same as in \cite[Section
6.1]{bchcubicalsets}.

We define the awfs $(C^t, F)$ of trivial cofibrations and fibrations
to be cofibrantly generated by pushout product of a generating
cofibration with an endpoint inclusion $\delta_i$, following the same
construction as in section \ref{sec:gamb-sattl-axioms}. Note that the
pushout product of a boundary inclusion with $\delta_i$ gives us the
standard open box in direction $i$. Hence this gives the same
definition of fibration as given by Huber in \cite[Remark
3.9]{huberthesis} or by the author in \cite[Section
5]{swannomawfs}. This defines the awfs $(C^t, F)$ uniquely up to
isomorphism. In order to show that $(C^t, F)$ actually exists, one can
either apply Garner's small object argument, or give a direct
description as in \cite{swannomawfs}. Once again this definition
automatically gives us axioms \ref{ax:fibtoltrivfib} and
\ref{ax:fibtobdyfib}.

This time constructing the comparison map is a little
trickier. However, it can be done using the following lemma.

\begin{lemma}
  \label{lem:gentrivcofiscof}
  Suppose that we are given a finitely cocomplete affine monoidal
  category $(\catc, \otimes)$ together with an awfs $(L, R)$ and an
  interval object $\delta_0, \delta_1 \colon 1 \rightarrow \intv$.

  If we are given an $L$-coalgebra structure on a map $m$ and an
  $L$-coalgebra structure on $[\delta_0, \delta_1] \poprod m$,
  then we can produce an $L$-coalgebra structure on $\delta_i
  \poprod m$ for $i = 0, 1$.

  Moreover, this assignment is functorial, in the sense that it is the
  action on objects of a functor $\lMap \times \lMap \to \lMap$.
\end{lemma}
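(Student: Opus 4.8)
The plan is to exploit the factorisation $\delta_i = [\delta_0,\delta_1] \circ \iota_i$, where $\iota_i \colon 1 \to 1 + 1$ is the $i$-th coproduct inclusion, and to push this factorisation through the interaction of the pushout product $\poprod$ with composition in its left variable. Concretely, I would express $\delta_i \poprod m$ as a composite of maps each of which inherits an $L$-coalgebra structure from the given data, and then invoke the standard closure properties of $\lMap$ (coproducts, cobase change, and vertical composition) to obtain a coalgebra structure on the composite.

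First I would recall the Leibniz composition formula: for composable $f_1, f_2$ and any map $g$, the map $(f_2 \circ f_1) \poprod g$ factors as a cobase change of $f_1 \poprod g$ followed by $f_2 \poprod g$, where the intermediate object is exactly $\dom(f_2 \poprod g)$ and the first leg is the pushout of $f_1 \poprod g$ along the evident map into $\dom((f_2 \circ f_1)\poprod g)$ induced by $f_2$. Applying this with $f_1 = \iota_i$, $f_2 = [\delta_0,\delta_1]$ and $g = m$ exhibits $\delta_i \poprod m$ as a pushout of $\iota_i \poprod m$ followed by $[\delta_0,\delta_1] \poprod m$; the two intermediate objects match because both equal $(\intv \otimes \dom m) \cup_{(1+1)\otimes \dom m} ((1+1)\otimes \operatorname{cod} m)$.

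Next I would identify $\iota_i \poprod m$ concretely. Since $\otimes$ is affine and preserves colimits, we have $1 \otimes (-) \cong (-)$ and $(1+1)\otimes(-) \cong (-)+(-)$, and a short computation of the defining pushout shows that $\iota_i \poprod m$ is isomorphic in $\catc^\btwo$ to the coproduct of an identity map with $m$ (identity in the $i$-th slot, $m$ in the other). Identities carry a canonical $L$-coalgebra structure and $\lMap$ is closed under coproducts, so the given coalgebra structure on $m$ induces one on $\iota_i \poprod m$. This coalgebra structure then transports along the cobase change (closure of $\lMap$ under pushouts), the coalgebra structure on $[\delta_0,\delta_1]\poprod m$ is supplied by hypothesis, and vertical composition of $L$-coalgebras (the comonad multiplication of the awfs) yields a coalgebra structure on their composite, namely on $\delta_i \poprod m$.

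For functoriality I would observe that each operation used --- the coproduct identification, the cobase change, and the vertical composition of coalgebras --- is functorial in its inputs, and that the only data consumed are a coalgebra on $m$ and a coalgebra on $[\delta_0,\delta_1]\poprod m$; composing these functorial assignments gives the required functor $\lMap \times \lMap \to \lMap$. I expect the main obstacle to be bookkeeping rather than conceptual: pinning down the Leibniz composition formula with the cobase-change map oriented correctly, and checking that the identification $\iota_i \poprod m \cong \mathrm{id} + m$ is natural enough that the transport of coalgebra structure is genuinely functorial. Once these identifications are in place, the closure properties of $\lMap$ do the rest.
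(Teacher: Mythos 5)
Your proof is correct and is essentially the paper's own argument: the paper likewise factors $\delta_i \poprod m$ through the domain of $[\delta_0,\delta_1] \poprod m$ (``glue the lid, then include the boundary into the cube''), shows the first leg is a cobase change of $m$ by a direct diagram chase --- which is exactly the pushout square your Leibniz composition formula for $\delta_i = [\delta_0,\delta_1] \circ \iota_i$ packages --- and concludes by closure of $L$-coalgebras under pushout and composition, with functoriality checked routinely. The only cosmetic difference is that the paper verifies the pushout square by hand and takes the cobase change of $m$ directly, so it never needs your intermediate identification $\iota_i \poprod m \cong \mathrm{id} + m$ or the fact that identities carry coalgebra structures.
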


\begin{proof}
  We will just do the case $i = 0$, the other case being
  similar. Roughly the idea is that if we are given an $n$ dimensional
  open box, we can find a filler by first building the top lid (which
  is $n - 1$ dimensional), and then filling the resulting boundary of
  the $n$-cube. We will show how $\delta_0 \poprod m$ can be built up
  from cofibrations.

  Say that $m \colon A \rightarrow B$ and write $D$ for the domain of
  $\delta_0 \poprod m$. By definition, $D$ is given by the
  following pushout diagram.
  \begin{equation*}
    \xymatrix{ 1 \otimes A \ar[r]^{\delta_0 \otimes A} \ar[d]_{1 \otimes
        m} &
      \intv \otimes A \ar[d] \\
      1 \otimes B \ar[r] & D \pushoutcorner}
  \end{equation*}

  Now if $L$ is the domain of $[\delta_0, \delta_1] \poprod m$,
  then note that we have a canonical map $i \colon D \rightarrow C$
  given by the inclusion $\iota_1 \colon 1 \rightarrow 2$. This gives
  us a commutative square of the form below.
  \begin{equation*}
    \xymatrix{ 1 \otimes A \ar[r]^{\iota_1} \ar[d]_{1 \otimes m} & \intv
      \otimes A \ar[r] & D \ar[d] \\
      1 \otimes B \ar[r]_{\iota_1 \otimes B} & 2 \otimes B \ar[r] & C}
  \end{equation*}
  
  One can verify by diagram chase that in fact this square is a
  pushout. We think of this as gluing the missing lid to the
  $n$-dimensional open box to make it into the boundary of the
  $n$-dimensional cube. Further diagram chasing shows that in fact
  $\delta_0 \poprod m$ factors as the map $D \rightarrow C$
  followed by $[\delta_0, \delta_1] \poprod m$. Informally, we
  visualise this as including the $n$-dimension open box into the
  $n$-cube, by first including it into the boundary, and then
  including the boundary into the $n$-cube.

  However, we have now exhibited $\delta_0 \poprod m$ as a pushout
  of a generating cofibration, $m$ composed with
  $[\delta_0, \delta_1] \poprod m$, which is also a generating
  cofibration. Both pushout and composition preserve $L$-coalgebra
  structure, so we obtain an $L$-coalgebra structure on
  $\delta_0 \poprod m$.

  Functoriality is tedious but straightforward to verify.
\end{proof}

\begin{theorem}
  Suppose that $M \colon J \to \sub^\btwo$ is the generating diagram of
  cofibrations, defined as above, using boundary inclusions. Then
  $\delta_0 \poprod M$ and $\delta_1 \poprod M$ factor through the
  forgetful functor from $C$-coalgebras to $\sub^\btwo$.
\end{theorem}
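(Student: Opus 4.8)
The plan is to derive this from Lemma~\ref{lem:gentrivcofiscof} applied to the comonad $C$ of the awfs $(C, F^t)$ of cofibrations and trivial fibrations. That lemma takes, for a map $m$, a $C$-coalgebra structure on $m$ together with a $C$-coalgebra structure on $[\delta_0, \delta_1] \poprod m$ and returns a $C$-coalgebra structure on $\delta_i \poprod m$, and it does so as the action on objects of a functor $\cMap \times \cMap \to \cMap$. Hence it suffices to produce two functors $J \to \cMap$ lying over $\sub^\btwo$: one lifting $M$, and one lifting $[\delta_0, \delta_1] \poprod M$. Composing the pairing of these two lifts with the functor furnished by the lemma then gives a functor $J \to \cMap$ lifting $\delta_i \poprod M$ for each $i$, which is precisely the desired factorisation.

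For the first lift I would use that $(C, F^t)$ is algebraically cofibrantly generated by $M$. In this situation the generating diagram comes with a canonical lift $\tilde{M} \colon J \to \cMap$ through the forgetful functor: each generating cofibration $M(j)$ acquires a comonad coalgebra structure, naturally in $j$, simply by being a generator. This provides the first required functor with no extra work, and in particular gives genuine comonad (not merely copointed endofunctor) coalgebras, which is exactly the level at which Lemma~\ref{lem:gentrivcofiscof} operates.

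For the second lift I would use the closure of the boundary inclusions under pushout product with $[\delta_0, \delta_1]$ noted above. Since the $n$-dimensional boundary inclusion is by definition the pushout product of $n$ copies of $[\delta_0, \delta_1]$, the map $[\delta_0, \delta_1] \poprod M(j)$ is again canonically isomorphic to a generating cofibration, namely a boundary inclusion one dimension higher. It therefore carries the canonical coalgebra structure of a generator, and since the identifying isomorphisms are natural in $j$ these structures assemble into a second functor $J \to \cMap$ lifting $[\delta_0, \delta_1] \poprod M$.

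The substantive content has already been discharged in Lemma~\ref{lem:gentrivcofiscof}, whose open-box filling argument builds the coalgebra on $\delta_i \poprod m$ out of the two inputs; the present theorem is essentially the packaging of that lemma over the generating diagram. Accordingly, I expect the only real obstacle to be bookkeeping: one must check that the identification of $[\delta_0, \delta_1] \poprod M(j)$ with a generating cofibration, and hence the coalgebra structure it carries, is genuinely natural in $j$, so that the construction takes place at the level of $\cMap$ rather than merely objectwise. Granting this, the same argument covers both $\delta_0 \poprod M$ and $\delta_1 \poprod M$, since the lemma treats $i = 1$ exactly as $i = 0$.
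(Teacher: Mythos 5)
Your proposal is correct and takes essentially the same route as the paper: the paper likewise observes that $[\delta_0, \delta_1] \poprod M$ is again (up to natural isomorphism) a diagram of generating boundary inclusions, hence functorially carries a $C$-coalgebra structure, and then concludes by applying lemma \ref{lem:gentrivcofiscof}. Your extra bookkeeping --- the canonical lift $J \to \cMap$ coming from algebraic freeness and the naturality in $j$ of the identifying isomorphisms --- simply makes explicit what the paper's terse proof leaves implicit.
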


\begin{proof}
  As explained above, we can show how to functorially assign
  $[\delta_0, \delta_1] \poprod M$ with the structure of a
  $C$-coalgebra. However, we can now apply lemma
  \ref{lem:gentrivcofiscof}.
\end{proof}

In this setting it is also possible to view axioms
\ref{ax:intvbdrylpcof}, \ref{ax:fibtoltrivfib} and
\ref{ax:fibtobdyfib} in a more geometric fashion. Axiom
\ref{ax:intvbdrylpcof} corresponds to the fact that given a boundary
of an $n$ dimensional cube, we obtain a new boundary of an $n + 1$
dimensional cube by taking the product with the interval to get a
tube, and then pasting on both ends of the tube. As stated above,
axiom \ref{ax:fibtoltrivfib} follows from the way we define generating
trivial cofibrations, in which a generating trivial cofibration is an
open box, which can be seen as a tube with only one of the ends pasted
on. Finally axiom \ref{ax:intvbdrylpcof} corresponds to the fact that
if we are given an open box we obtain a new open box when we take the
product with the interval, and then paste on both sides of the
prism, while leaving the top open.

\subsubsection{Van den Berg-Frumin/Orton-Pitts Axioms}
\label{sec:van-den-berg-1}

In \cite[Section 7.5.2]{swanliftprob} the author gave a definition
related to a class of structures considered by Orton and Pitts in
\cite{pittsortoncubtopos} and by Van den Berg and Frumin in
\cite{vdbergfrumin}. This presentation is closest to that of Van den
Berg and Frumin, which in turn is based on the Gambino-Sattler
definition above.
We work over a locally cartesian closed category
$\catc$ with finite colimits, disjoint coproducts, and an interval object
$\delta_0, \delta_1 \colon 1 \to \intv$. In order to construct the
awfs's $(C, F^t)$ and $(C^t, F)$ we will assume that $\catc$ satisfies
one of the ``codomain fibred'' versions of the small object argument
developed by the author in \cite{swanwtypered}. For example, it
suffices that $\catc$ is a topos with natural number object and
satisfies WISC. Examples of such structures include CCHM cubical sets,
as defined in \cite{coquandcubicaltt} and simplicial sets. For now, we
don't assume that the interval object has connections, although we
will see in section \ref{sec:using-conn-homot} that in that case we
can simplify the argument.

In this setup we start with an awfs $(C, F^t)$ which is cofibrantly
generated with respect to the codomain fibration by a family of maps
of the form below.
\begin{equation*}
  \xymatrix{ 1 \ar[rr]^\top \ar[dr]_\top & & \Sigma \ar@{=}[dl] \\
    & \Sigma & }
\end{equation*}
In this case we automatically get pullback stability, as for BCH
cubical sets. For
now we don't assume that the generating cofibrations are closed under
composition, although we'll see later in section
\ref{sec:using-step-one} that in this case the argument simplifies. We
do however, assume that cofibrations are closed under finite
union and that both endpoint inclusions $\delta_i$ are
cofibrations. This ensures we get axiom \ref{ax:intvbdrylpcof}.

We again define $(C^t, F)$ as cofibrantly generated by pushout
product with a cofibration and an endpoint inclusion. In contrast to
BCH cubical sets, in this case $(C^t, F)$ will also be cofibrantly
generated with respect to the codomain fibration on $\catc$. For this
to work smoothly, instead of working with arbitrary monoidal products
as before, we only consider cartesian product, which easily extends to
a fibred monoidal product over the codomain fibration, which then
ensures pushout product is also fibred \cite[Section 6]{swanliftprob}.
We then define
$(C^t, F)$ to be the awfs cofibrantly generated by the coproduct of
the following two families of maps.
\begin{equation*}
  \begin{gathered}
    \xymatrix{ \intv +_1 \Sigma \ar[rr]^{\delta_0 \hat \times \top} \ar[dr]
      & & \intv
      \times \Sigma \ar[dl] \\
      & \Sigma & }
  \end{gathered}
  \qquad
  \begin{gathered}
    \xymatrix{ \intv +_1 \Sigma \ar[rr]^{\delta_1 \hat \times \top} \ar[dr]
      & & \intv
      \times \Sigma \ar[dl] \\
      & \Sigma & }
  \end{gathered}
\end{equation*}
See \cite[Section 7.5.2]{swanliftprob} for more details. Note that we
again easily obtain axioms \ref{ax:fibtoltrivfib} and
\ref{ax:fibtobdyfib} from this definition.

\subsection{Proof of Existence of Identity Types}
\label{sec:proof-exist-ident}

First note that we can define a functorial choice of factorisations of
diagonal maps using the following well known construction, usually
referred to as \emph{mapping path space}.

Given a map $f \colon X \to Y$, we define $P(f)$ to be given by the
pullback below, where the bottom map $Y \to P(Y)$ corresponds to the
projection $Y \otimes \intv \to Y$ under the adjunction.
\begin{equation*}
  \xymatrix{ P(f) \pullbackcorner \ar[r] \ar[d] & P(X) \ar[d] \\
    Y \ar[r] & P(Y)}
\end{equation*}
When $f$ is clear from the context, we will also write $P(f)$ as
$P_Y(X)$.

Note that we have evident maps $r \colon X \to P_Y(X)$ and projections
$p_0, p_1 \colon P_Y(X) \to X$ over $Y$.

One can verify by diagram chase that
$p_f := \langle p_0, p_1 \rangle \colon P_Y(X) \to X \times_Y X$ can
be viewed as a pullback of $\monexpl{f}{[\delta_0, \delta_1]}$ (see
\cite[Proposition 2.3.3]{voevodskykapulkinlumsdainess} for the
analogous statement in simplicial sets). We can
therefore assign it the structure of a fibration by first applying
axiom \ref{ax:fibtobdyfib} to give $\monexpl{f}{[\delta_0, \delta_1]}$
the structure of a fibration, and then assigning $p_f$ the unique
fibration structure preserved by the pullback.

We construct the choice of factorisations $\id_Y(X)$ using $(C, F^t)$
in the pre-ams together with $P$, exactly like in theorem
\ref{thm:strfromwk}. It remains to show that this does give us a
stable functorial choice of very good path objects.

We will use a structured version of the definitions of homotopy and
strong deformation retract defined below. This is based on the
non-structured version used e.g. by Gambino and Sattler in
\cite[Remark 4.2]{gambinosattlerpi}.

\begin{definition}
  Suppose we are given morphisms $f,g \colon X \to Y$. A
  \emph{structured homotopy} from $f$ to $g$ is a map $h \colon X
  \otimes \intv \to Y$ fitting into the following commutative diagram.
  \begin{equation*}
    \begin{gathered}
      \xymatrix{ X \ar[dr]|{X \otimes \delta_0} \ar@/^/[drr]^{f} & & \\
        & X \otimes \intv \ar[r]^h & Y \\
        X \ar[ur]|{X \otimes \delta_1} \ar@/_/[urr]_{g} & &}
    \end{gathered}
  \end{equation*}

  Suppose we are given maps $f,g \colon X \to Y$,
  $f', g' \colon X' \to Y'$, together with a structured homotopy $h$
  from $f$ to $g$ and a structured homotopy $h'$ from $f'$ to $g'$,
  and maps $k \colon X \to X'$ and $l \colon Y \to Y'$. We say $k$ and
  $l$ \emph{preserve the structured homotopies} if the following
  equations hold.
  \begin{enumerate}
  \item $l \circ f = f' \circ k$
  \item $l \circ g = g' \circ k$
  \item $h' \circ (k \otimes \intv) = l \circ h$
  \end{enumerate}
\end{definition}

\begin{definition}
  A \emph{(structured) strong deformation retract} from $X$ to $Y$
  consists of the following.
  \begin{enumerate}
  \item A morphism $f \colon X \to Y$.
  \item A morphism $s \colon Y \to X$.
  \item A structured homotopy $h$ from $s \circ f$ to $1_X$.
  \end{enumerate}
  The maps are required to satisfy the following equalities.
  \begin{enumerate}
  \item $f \circ s = 1_Y$
  \item $h \circ (s \otimes \intv) = s \circ \pi_0$ (where $\pi_0$ is
    the projection $X \otimes \intv \to X$)
  \end{enumerate}

  Suppose we are given strong deformation retracts $(f, s, h)$ and
  $(f', s', h')$, together with a map $k \colon X \to X'$ and
  $l \colon Y \to Y'$. We say $k$ and $l$ \emph{preserve the
    structured strong deformation retracts} if they satisfy the
  following equalities.
  \begin{enumerate}
  \item $l \circ f = f' \circ k$
  \item $k \circ s = s' \circ l$
  \item $k$ and $l$ preserve the structured homotopies $h$ and $h'$
  \end{enumerate}
\end{definition}

\begin{lemma}
  \label{lem:projistrivfib}
  Suppose we are given a fibration $f \colon X \rightarrow Y$. Then we
  can given each projection $e_i \colon P_Y X \rightarrow X$ the
  structure of a trivial fibration. Moreover this is functorial, in
  the sense that given a morphism of fibrations, the commutative
  square derived from the functoriality of $P_Y X$ is a morphism of
  trivial fibrations.
\end{lemma}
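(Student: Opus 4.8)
The plan is to realise each $e_i$ as a pullback of the pullback hom $\monexpl{\delta_i}{f}$, which axiom \ref{ax:fibtoltrivfib} already equips functorially with a trivial fibration structure, and then transport that structure along the pullback using pullback stability of $(C, F^t)$. This is the exact analogue of the construction carried out just above, where $p_f = \langle e_0, e_1 \rangle$ was given a fibration structure by viewing it as a pullback of $\monexpl{f}{[\delta_0, \delta_1]}$ via axiom \ref{ax:fibtobdyfib}; here we simply isolate a single endpoint.

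First I would unwind the (co)tensor notation. Since $P$ is right adjoint to $- \otimes \intv$ we have $\hom(\intv, -) = P$, while $\hom(1, -) \cong \id$ because the unit is terminal; hence $\monexpl{\delta_i}{f}$ is a map $P(X) \to P(Y) \times_Y X$, the pullback being formed over the endpoint evaluation $P(Y) \to Y$ and $f$. I would then write down the comparison square
\begin{equation*}
  \begin{gathered}
    \xymatrix{ P_Y X \pullbackcorner \ar[r] \ar[d]_{e_i} & P(X)
      \ar[d]^{\monexpl{\delta_i}{f}} \\
      X \ar[r]_-{s} & P(Y) \times_Y X }
  \end{gathered}
\end{equation*}
whose top edge is the defining projection $P_Y X \to P(X)$ and whose bottom edge $s$ pairs $f$ followed by the constant-path map $Y \to P(Y)$ with the identity on $X$. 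The key point in checking that this square is a pullback is that the constant-path map $Y \to P(Y)$ is a split monomorphism, retracted by either endpoint evaluation; this forces the $Y$-coordinate of any generalised element to agree with $f(\phi(i))$ and thereby identifies the pullback with $P_Y X = Y \times_{P(Y)} P(X)$, with projection to $X$ given precisely by evaluation at $i$. This is a diagram chase of the same flavour as the one already invoked for $p_f$.

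Given the pullback, I would apply axiom \ref{ax:fibtoltrivfib} to the $F$-algebra $f$ to obtain a functorial $F^t$-algebra structure on $\monexpl{\delta_i}{f}$, and then use pullback stability of $(C, F^t)$ to assign $e_i$ the unique $F^t$-algebra structure transported along $s$, exactly as $p_f$ was given the unique fibration structure preserved by the pullback. For functoriality, a morphism of fibrations $(k,l) \colon f \to f'$ induces a morphism $\monexpl{\delta_i}{f} \to \monexpl{\delta_i}{f'}$ (pullback hom is functorial in its second argument, and axiom \ref{ax:fibtoltrivfib} makes the assignment of $F^t$-structure functorial), while the comparison squares above are natural in $f$. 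Hence the square on projections derived from the functoriality of $P_Y X$ is the pullback of a morphism of $F^t$-algebras, and since the pullback transport is itself functorial, this square is a morphism of trivial fibrations.

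The main obstacle I anticipate is bookkeeping rather than conceptual: one must ensure that the single functorial operation ``pull back along $s$'' is applied to the functorial output of axiom \ref{ax:fibtoltrivfib}, so that both the $F^t$-structure on $e_i$ and its preservation under morphisms of fibrations are produced in one coherent step. In particular this requires using pullback stability of $(C, F^t)$ in its structured form — as a functor from $F^t$-algebras-with-pullback-squares to $F^t$-algebras — rather than merely as the statement that the underlying class of trivial fibrations is closed under pullback, and it requires verifying that the comparison square is natural in $f$ so that the two instances (for $f$ and for $f'$) assemble into such a morphism.
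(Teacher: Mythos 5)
Your proof is correct and is essentially identical to the paper's: the paper likewise exhibits $e_i$ as a pullback of $\monexpl{\delta_i}{f}$, equips the latter with a trivial fibration structure via axiom \ref{ax:fibtoltrivfib}, and assigns $e_i$ the unique trivial fibration structure making the pullback a morphism of trivial fibrations, with functoriality following just as you describe. Your explicit comparison square and the diagram chase using the split monomorphism $Y \to P(Y)$ merely fill in what the paper's proof leaves as ``straightforward to exhibit''.
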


\begin{proof}
  It is straightforward to exhibit $e_i$ as a pullback of
  $\monexpl{\delta_i}{f}$, which has the structure of a trivial
  fibration by axiom \ref{ax:fibtoltrivfib}. We assign $e_i$ the
  unique trivial fibration structure that makes the pullback a
  morphism of trivial fibrations.
\end{proof}

\begin{lemma}
  \label{lem:mkstrongdefretract}
  Suppose we are given maps $r \colon A \rightarrow B$ and
  $i \colon B \rightarrow A$ such that $i \circ r = 1_A$,
  together with the structure of a
  cofibration on $r$ and the structure of a trivial fibration on
  $i$. Then we can extend $r$ and $i$ to a structured strong
  deformation retract.

  Also, this is functorial in the sense that every commutative square that
  preserves the cofibration and trivial fibration structures leads to
  a commutative square preserving the strong deformation retract
  structures.
\end{lemma}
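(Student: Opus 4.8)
The plan is to set $f := i$ and $s := r$, so that the equality $f \circ s = i \circ r = 1_A$ required of a strong deformation retract holds by assumption. It then remains to produce a structured homotopy $h \colon B \otimes \intv \to B$ from $s \circ f = r \circ i$ to $1_B$ satisfying the extra identity $h \circ (r \otimes \intv) = r \circ \pi_0$, where $\pi_0 \colon B \otimes \intv \to B$ is the projection coming from affineness of $\otimes$. I would obtain $h$ as a canonical diagonal filler.

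First I would assemble the boundary data for $h$. On $B \otimes [\delta_0, \delta_1]$ (the two endpoints) I prescribe $r \circ i$ at $0$ and $1_B$ at $1$, and along $r \otimes \intv$ I prescribe the constant homotopy $r \circ \pi_0$. Using $i \circ r = 1_A$ and $\pi_0 \circ (A \otimes \delta_j) = 1_A$, these agree on the common face $A \otimes [\delta_0, \delta_1]$, so they glue to a single map out of the domain of the pushout product $r \poprod [\delta_0, \delta_1]$. This exhibits $h$ as the solution to the lifting problem whose left-hand map is $r \poprod [\delta_0, \delta_1]$, whose right-hand map is $i$, whose top is the glued boundary data, and whose bottom is $i \circ \pi_0 \colon B \otimes \intv \to A$. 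A short diagram chase (again using $i \circ r = 1_A$ and the projection identities) shows this square commutes.

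Now $r \poprod [\delta_0, \delta_1]$ carries a cofibration structure, functorially in $r$, by axiom \ref{ax:intvbdrylpcof}, while $i$ carries a trivial fibration ($F^t$-algebra) structure by hypothesis. Hence the awfs $(C, F^t)$ supplies a canonical filler $h$, and reading off the upper triangle gives $h \circ (B \otimes \delta_0) = r \circ i$, $h \circ (B \otimes \delta_1) = 1_B$ and $h \circ (r \otimes \intv) = r \circ \pi_0$, which are exactly the structured homotopy and the second strong deformation retract identity. (The lower triangle yields the harmless extra identity $i \circ h = i \circ \pi_0$.)

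For functoriality I would appeal to the naturality of the canonical filler in an awfs: a morphism of coalgebras together with a morphism of algebras, compatible over the two squares, is sent to a commuting pair of canonical lifts. Given a commutative square preserving the cofibration structure on $r$ and the trivial fibration structure on $i$, the functorial part of axiom \ref{ax:intvbdrylpcof} turns it into a morphism of cofibrations $r \poprod [\delta_0, \delta_1] \to r' \poprod [\delta_0, \delta_1]$, and the boundary data and the bottom maps are natural in the evident way, so the two lifting problems are connected by a morphism. Naturality of the filler then gives $h' \circ (k \otimes \intv) = l \circ h$, which together with the given compatibilities $l \circ f = f' \circ k$ and $k \circ s = s' \circ l$ is precisely a morphism of structured strong deformation retracts. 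The main obstacle is this last step: the existence of $h$ is a routine lift, but verifying that the assembled boundary data and the chosen fillers genuinely form a morphism of lifting problems — so that the naturality of the awfs filler applies — requires careful bookkeeping, and one must ensure the cofibration structure used is functorial in the required sense, invoking proposition \ref{prop:fixcofibs} to pass to genuine $C$-coalgebras should the canonical filler demand comonad coalgebras rather than merely copointed-endofunctor ones.
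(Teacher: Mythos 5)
Your proof is correct and takes essentially the same route as the paper's: the identical lifting problem of $r \poprod [\delta_0, \delta_1]$ against the trivial fibration $i$, with boundary data $[r \circ i, 1_B]$ on $B \otimes 2$ and $r \circ \pi_0$ on $A \otimes \intv$, bottom map $i \circ \pi_0$, the three required identities read off the upper triangle, and functoriality via the functorial part of axiom \ref{ax:intvbdrylpcof} plus naturality of canonical fillers. Your closing invocation of proposition \ref{prop:fixcofibs} is an unnecessary hedge (the canonical filler and its naturality only require a copointed-endofunctor coalgebra structure on the left map against the monad algebra structure on $i$), but it does no harm.
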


\begin{proof}
  We will define a lifting problem of
  $r \hat{\otimes} [\delta_0, \delta_1]$ against $i$. By axiom
  \ref{ax:intvbdrylpcof}, we will be able to find a diagonal filler.

  The domain of $r \hat{\otimes} [\delta_0, \delta_1]$ is by
  definition the pushout below.
  \begin{equation*}
    \xymatrix{ A \otimes 2 \ar[r]^{A \otimes [\delta_0, \delta_1]}
      \ar[d]_{r \otimes \intv} & A \otimes \intv \ar[d] \\
      B \otimes 2 \ar[r] & \pushoutcorner
      A \otimes \intv +_{A \otimes 2} B \otimes 2}
  \end{equation*}
  To define a map $A \otimes \intv +_{A \otimes 2} B \otimes 2
  \rightarrow B$ is therefore to define a map $B \otimes 2 \rightarrow
  B$ and a map $A \otimes \intv \rightarrow B$ ensuring that the two
  maps $A \otimes 2 \rightarrow B$ agree.
  
  Note that since $\otimes$ is affine and preserves colimits, we have
  $B \otimes 2 \cong B + B$. We can therefore define a map $B \otimes
  2 \rightarrow B$ as $[r \circ i, 1_B]$.

  We define the map $A \otimes \intv \rightarrow B$ to be $r \circ
  \pi_0$.

  This then gives us the following lifting problem, where the top
  morphism is defined as above.
  \begin{equation*}
    \xymatrix{ A \otimes \intv + B \otimes 2 \ar[d]_{r \hat{\otimes}
        [\delta_0, \delta_1]} \ar[r] & B \ar[d]^i \\
      B \otimes \intv \ar[r]_{i \circ \pi_0} & A}
  \end{equation*}
  We take $h \colon B \otimes \intv \rightarrow B$ to be the
  diagonal filler given by the cofibration structure on $r
  \hat{\otimes} [\delta_0, \delta_1]$ (which in turn is given by the
  cofibration structure on $r$ via axiom \ref{ax:intvbdrylpcof})
  together with the trivial fibration structure on $i$.

  We need to check that
  $h \colon B \otimes \intv \rightarrow B$ does witness that $r$ is a
  strong deformation retract. The upper triangle identity ensures that
  $h \,\circ \, B \otimes \delta_0 = r \circ i$, that
  $h \, \circ \, B \otimes \delta_1 = 1_B$, and that
  $h \, \circ \, r \otimes \intv = r \circ \pi_0$.
  Hence, $h$ does
  indeed witness that $r$ is a strong deformation retract.

  Functoriality follows from axiom \ref{ax:intvbdrylpcof} together
  with diagram chasing.
\end{proof}

\begin{lemma}
  \label{lem:reflretract}
  The map $C r \colon X \rightarrow \id_Y(X)$ has the structure of a
  strong deformation retract.

  Furthermore, this is functorial in the following sense. If we are
  given a morphism of fibrations from $f \colon X \rightarrow Y$ to
  $f' \colon X' \rightarrow Y'$, then the commutative square given by
  the functoriality of $\id$ preserves the strong deformation retract
  structure.
\end{lemma}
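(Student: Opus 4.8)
The plan is to reduce everything to Lemma \ref{lem:mkstrongdefretract}, which turns any cofibration that splits a trivial fibration into a structured strong deformation retract. Recall from the construction preceding the statement (following theorem \ref{thm:strfromwk}) that $\id_Y(X)$ is the object obtained by applying the $(C, F^t)$ factorisation to the reflexivity inclusion $r \colon X \to P_Y(X)$ of the mapping path space. Thus $C r \colon X \to \id_Y(X)$ comes with a cofibration structure and $F^t r \colon \id_Y(X) \to P_Y(X)$ with a trivial fibration structure, and $F^t r \circ C r = r$. To feed this into Lemma \ref{lem:mkstrongdefretract} it remains to produce a trivial fibration $i \colon \id_Y(X) \to X$ with $i \circ C r = 1_X$; the lemma will then present $C r$ as the section of a strong deformation retract of $\id_Y(X)$ onto $X$.

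First I would take $i := e_0 \circ F^t r$, where $e_0 \colon P_Y(X) \to X$ is one of the endpoint projections of the mapping path space. By Lemma \ref{lem:projistrivfib}, $e_0$ carries a trivial fibration structure, and $F^t r$ is an $F^t$-algebra from the factorisation; since $F^t$-algebras are closed under composition, $i$ inherits a trivial fibration structure. For the splitting identity, note that $r$ is the fibrewise constant-path map, so $e_0 \circ r = 1_X$, and hence $i \circ C r = e_0 \circ (F^t r \circ C r) = e_0 \circ r = 1_X$. Either endpoint works equally well here.

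With the cofibration $C r$, the trivial fibration $i$, and $i \circ C r = 1_X$ in place, Lemma \ref{lem:mkstrongdefretract} directly supplies the strong deformation retract structure on $C r$. For functoriality, I would note that each ingredient — the mapping path space $P_Y(X)$ and the inclusion $r$, the $(C, F^t)$ factorisation giving $C r$ and $F^t r$, the projection $e_0$ (functorial by Lemma \ref{lem:projistrivfib}), and composition of trivial fibrations — is functorial in the fibration $f$. Hence a morphism of fibrations $f \to f'$ yields a commutative square preserving both the cofibration structure on $C r$ and the trivial fibration structure on $i$, and the functorial clause of Lemma \ref{lem:mkstrongdefretract} converts this into a square preserving the strong deformation retract data. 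I expect the only real difficulty to be bookkeeping rather than a genuine calculation: one must confirm that the \emph{structures}, not just the underlying maps, line up — in particular that $e_0 \circ F^t r$ inherits an honest $F^t$-algebra (monad, not merely pointed-endofunctor) structure, and that the functoriality squares assembled from $P$, the factorisation, and Lemma \ref{lem:projistrivfib} are mutually compatible enough to be accepted by the functorial half of Lemma \ref{lem:mkstrongdefretract}.
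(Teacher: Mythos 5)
Your proposal is correct and follows essentially the same route as the paper: the paper's proof likewise sets $i := e_0 \circ F^t r$, obtains its trivial fibration structure by composing the structure on $e_0$ from Lemma \ref{lem:projistrivfib} with that on $F^t r$, and then applies Lemma \ref{lem:mkstrongdefretract}. Your extra checks (the splitting identity $i \circ C r = e_0 \circ r = 1_X$ and the functoriality bookkeeping) are exactly the details the paper leaves implicit.
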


\begin{proof}
  Note that $C r$ has a retract given by $i := e_0 \circ F^t r$. Note
  that $e_0$ is given the structure of a trivial fibration by lemma
  \ref{lem:projistrivfib}, and $F^t$ also has the structure of a
  trivial fibration. By composing these, we give $i$ also the
  structure of a trivial fibration.

  We now apply lemma \ref{lem:mkstrongdefretract}.
\end{proof}

\begin{lemma}
  \label{lem:cofsdrimpliestrivcof}
  Suppose that we are given a cofibration $t \colon A \rightarrow B$
  with the structure of a strong deformation retract. Then we can
  assign $t$ the structure of a trivial cofibration, and moreover this
  can be done functorially.
\end{lemma}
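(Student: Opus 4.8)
The plan is to exhibit $t$ as a retract, in the arrow category $\catc^\btwo$, of the pushout product $t \poprod \delta_0$, and then to transfer the trivial cofibration structure along this retract. The key input is proposition \ref{prop:fibtoltrivfibdual}: since $t$ carries a cofibration structure, $t \poprod \delta_0$ carries a trivial cofibration structure, and this assignment is functorial in $t$. A trivial cofibration structure is by definition a coalgebra over the underlying copointed endofunctor of $C^t$ (i.e.\ an object of $\ctmap$), and such coalgebras are closed under retracts; hence any retract of $t \poprod \delta_0$ in $\catc^\btwo$, in particular $t$ itself once we supply the retracting data, inherits a trivial cofibration structure.

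To build the retract I would use the strong deformation retract structure, written as a retraction $q \colon B \to A$ with $q \circ t = 1_A$ together with a structured homotopy $h \colon B \otimes \intv \to B$ from $t \circ q$ to $1_B$ satisfying $h \circ (t \otimes \intv) = t \circ \pi_0$. The domain of $t \poprod \delta_0$ is the pushout $P = B +_A (A \otimes \intv)$ and its codomain is $B \otimes \intv$. For the codomain part of the retract I take $B \xrightarrow{B \otimes \delta_1} B \otimes \intv \xrightarrow{h} B$, which composes to $h \circ (B \otimes \delta_1) = 1_B$ by the endpoint condition. For the domain part I take $u_1 \colon A \to P$ to be $A \otimes \delta_1$ followed by the inclusion of the $A \otimes \intv$ summand, and $u_2 \colon P \to A$ to be the map that is $q$ on the $B$ summand and $\pi_0$ on the $A \otimes \intv$ summand; these agree on the glued copy of $A$ precisely because $q \circ t = 1_A$, and they satisfy $u_2 \circ u_1 = \pi_0 \circ (A \otimes \delta_1) = 1_A$.

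The two squares of the retract then commute by direct computation, using each piece of the data exactly once: the upper square reduces to $t \otimes \delta_1 = (B \otimes \delta_1) \circ t$, while the lower square splits over the two summands of $P$, where on the $B$ summand it is the endpoint identity $h \circ (B \otimes \delta_0) = t \circ q$ and on the $A \otimes \intv$ summand it is the homotopy condition $h \circ (t \otimes \intv) = t \circ \pi_0$. Thus all four retract identities hold. Functoriality is then automatic: a morphism preserving the cofibration and strong deformation retract structures induces a morphism of the pushout products (by the functoriality in proposition \ref{prop:fibtoltrivfibdual}) together with a compatible morphism of the retracting data (by naturality of $B \otimes \delta_1$, $h$, $q$ and $\pi_0$), so the induced trivial cofibration structures are preserved.

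I expect the main obstacle to be bookkeeping rather than conceptual. The conceptual content — that the inclusion of a deformation retract is anodyne — is classical; the genuine work is getting the orientation exactly right so that the single homotopy $h$ simultaneously supplies the codomain contraction (via $\delta_1$) and witnesses both squares (via $\delta_0$ on the $B$ summand and the degeneracy condition on the $A \otimes \intv$ summand), and in noting that one lands in $\ctmap$ rather than in comonad coalgebras, since it is exactly closure under retracts of the former that makes the transfer legitimate.
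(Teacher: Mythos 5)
Your proposal is correct and follows essentially the same route as the paper: the paper likewise exhibits $t$ as a retract of $t \hat{\otimes} \delta_0$ using exactly your maps ($A \otimes \delta_1$ into the pushout and $[i,\pi_0]$ out of it on domains, $B \otimes \delta_1$ and $h$ on codomains), obtains the trivial cofibration structure on $t \hat{\otimes} \delta_0$ from proposition \ref{prop:fibtoltrivfibdual}, and transfers it along the retract. Your square verifications and the observation that one must work with copointed-endofunctor coalgebras (closed under retracts) rather than comonad coalgebras match the paper's argument.
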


\begin{proof}
  This is essentially a functorial version of a special case of
  \cite[Lemma 4.3]{gambinosattlerpi}, but for completeness we write
  out the details below.

  We write out the structure of a strong deformation retract as a map
  $i$ such that $i \circ t = 1_A$ and the
  map $h \colon B \otimes \intv \rightarrow B$ in the diagram below.
  \begin{equation}
    \label{eq:1}
    \begin{gathered}
      \xymatrix{ B \ar[dr]|{B \otimes \delta_0} \ar@/^/[drr]^{t \circ i} & & \\
        & B \otimes \intv \ar[r]^h & B \\
        B \ar[ur]|{B \otimes \delta_1} \ar@/_/[urr]_{1_B} & &}      
    \end{gathered}
  \end{equation}
  We also have that $i \circ h = i \circ \pi_0$ and $h \circ t \otimes
  \intv = t \circ \pi_0$.

  We can now exhibit $t$ as a retract of $t \hat{\otimes}
  \delta_0$ in the diagram below. Note that we can show that the right
  hand square really does commute using the upper part of \eqref{eq:1}
  together with the identity $h \circ t \otimes
  \intv = t \circ \pi_0$.
  The upper horizontal composition is trivially the identity.
  We show that the lower horizontal composition is
  the identity on $B$ by using the lower half of \eqref{eq:1}.
  \begin{equation*}
    \xymatrix{ A \ar[d]_t \ar[r]^{A \otimes \delta_1}
      & B +_A A \otimes \intv \ar[d]|{t
        \hat{\otimes} \delta_0} \ar[r]^{[i, \pi_0]} & A \ar[d]^t \\
      B \ar[r]_{B \otimes \delta_1} & B \otimes \intv \ar[r]_{h} &
      B}
  \end{equation*}
  
  However, this allows us to assign $t$ the structure of a trivial
  cofibration from that of $t \hat{\otimes} \delta_0$, which we
  construct using proposition \ref{prop:fibtoltrivfibdual} together
  with axiom \ref{ax:fibtoltrivfib}.
\end{proof}

\begin{theorem}
  The objects $\id_Y(X)$ can be given the structure of a stable
  functorial choice of very good path objects.
\end{theorem}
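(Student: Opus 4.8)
The plan is to reuse the object-level construction of theorem \ref{thm:strfromwk} unchanged, and to supply the two pieces of \emph{structure} --- the $C^t$-coalgebra on the left map and the $F$-algebra on the right map --- by means of the lemmas just proved rather than by a $3$-for-$2$ operator, which we no longer assume. Concretely, the very good path object is the factorisation $X \xrightarrow{C r_f} \id_Y(X) \xrightarrow{p_f \circ F^t r_f} X \times_Y X$, where $r_f \colon X \to P(f)$ is the mapping path space inclusion and $C r_f, F^t r_f$ arise by applying the $(C, F^t)$ factorisation to $r_f$, so that $\id_Y(X) = M r_f$. It then remains to equip $C r_f$ with a $\ctmap$-structure, to equip $p_f \circ F^t r_f$ with an $F$-algebra structure, and to verify functoriality and stability.

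For the left map I would observe that $C r_f$ already carries a cofibration structure by construction, and that by lemma \ref{lem:reflretract} it also carries a structured strong deformation retract; lemma \ref{lem:cofsdrimpliestrivcof} then upgrades these to a trivial cofibration, i.e. the required $\ctmap$-structure. For the right map, $F^t r_f$ carries the $F^t$-algebra structure coming from the factorisation, which the comparison map transports to an $F$-algebra structure; composing this with the $F$-algebra structure already placed on $p_f$ (via its presentation as a pullback of $\monexpl{f}{[\delta_0,\delta_1]}$ and axiom \ref{ax:fibtobdyfib}) yields an $F$-algebra structure on $p_f \circ F^t r_f$. Since lemmas \ref{lem:reflretract} and \ref{lem:cofsdrimpliestrivcof}, the comparison map, and composition of algebras are all functorial, these assignments assemble into the functor $\fmap \to \fmap \times_{\catc} \ctmap$ demanded by the definition of a functorial choice of very good path objects.

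The remaining and most substantial step is stability, for which I would follow the pullback-pasting argument of theorem \ref{thm:strfromwk} line by line. Starting from a pullback square as in \eqref{eq:12}, a commutative cube shows that $X \times_Y X \to X' \times_{Y'} X'$ over $Y \to Y'$ is a pullback; pasting this beneath the stability of the path object $P$ gives that $X \to X'$ over $P(f) \to P(f')$ is a pullback; applying the pullback-preserving factorisation $(C, F^t)$ and pasting once more then shows that $\id_Y(X) \to \id_Y(X')$ over $X \times_Y X \to X' \times_{Y'} X'$ is a pullback, which is exactly the required stability. The one genuinely new ingredient compared with theorem \ref{thm:strfromwk} is that here the stability of $P$ is not an assumption but must be established for the explicit mapping path space, and I expect this to be the main obstacle: since $P$ is a right adjoint (hence preserves pullbacks) and $p_f$ is realised as a pullback of the pullback hom $\monexpl{f}{[\delta_0,\delta_1]}$, stability of $P$ should follow from naturality of pullback hom together with preservation of pullbacks by right adjoints, but carefully matching the induced structure maps through these identifications is where the real work lies.
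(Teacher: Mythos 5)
Your proposal matches the paper's proof essentially step for step: lemmas \ref{lem:reflretract} and \ref{lem:cofsdrimpliestrivcof} supply the trivial cofibration structure on $C r_f$, the comparison map $\xi$ plus functorial composition of $F$-algebras supply the fibration structure on $p_f \circ F^t r_f$, and stability is inherited verbatim from the pasting argument of theorem \ref{thm:strfromwk}. The only divergence is one of emphasis: you flag stability of the mapping path space as the main remaining obstacle, whereas the paper takes it as read, and your sketch (the right adjoint $P$ preserves pullbacks, so $P(f) = Y \times_{P(Y)} P(X)$ is pullback-stable, and the required square over $X \times_Y X \to X' \times_{Y'} X'$ follows by pullback cancellation) is exactly the routine verification needed, not a genuine gap.
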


\begin{proof}
  We need to show how to give the map $C r \colon X \to \id_Y(X)$ the
  structure of a trivial cofibration. We showed in lemma
  \ref{lem:reflretract} how to give it the structure of a
  strong deformation retract. In any case it has the structure of a
  cofibration, and so by lemma \ref{lem:cofsdrimpliestrivcof} we can
  assign it the structure of a trivial cofibration.

  We next show how to assign a fibration structure to the map
  $\id_Y(X) \to X \times_Y X$ defined as the composition of $F^t r
  \colon \id_Y(X) \to P_Y X$
  with the original map $p_f \colon P_Y X \to X \times_Y X$. We know
  that $F^t r$ has the structure of a trivial fibration, so using the
  comparison map $\xi \colon (C^t, F) \to (C, F^t)$ we can assign it
  also the structure of a fibration. In general in an awfs fibrations
  can be composed functorially, giving us a fibration structure on
  $p_f \circ F^t r$, as required.

  Finally, the proof of stability is exactly the same as in theorem
  \ref{thm:strfromwk}, using the assumptions that $P$ is stable and
  that $(C, F^t)$ is pullback stable.
\end{proof}

\section{Two Simplifications in CCHM Cubical Sets}
\label{sec:simpl-when-step}

The first version of this work was largely specific to
$01$-substitution sets, although it was fairly clear that the main
ideas should generalise to other situations. At this time Cohen,
Coquand, Huber and M\"{o}rtberg
were already using the newer definition of cubical sets, which now
appears in \cite{coquandcubicaltt}. There was some discussion between
Thierry Coquand, Simon Huber and the author on how to translate the
ideas into the new definition of cubical sets. During this discussion,
Coquand noticed that in fact in this specific situation, a
simplified definition of identity type can be used, where it
is easier to give explicit definitions of the objects and maps
involved. This simplified version was the one used for the definition
of identity types in \cite{coquandcubicaltt}. The same construction
was applied to a wide class of models by Orton and Pitts in
\cite{pittsortoncubtopos}. Another variant of that construction was
used by Van den Berg and Frumin in \cite{vdbergfrumin} and in fact the
presentation here will be much closer to the Van den Berg-Frumin
version.

\subsection{Using Connections for the Strong Deformation Retract Structure}
\label{sec:using-conn-homot}

The first observation was that for constructing the strong deformation
retract structure one can exploit the fact that CCHM cubical sets
include connections to get a more explicit definition that does not
require the map $f$ to be a fibration.

We will give an explanation of this in the lemmas below. Following
Gambino and Sattler \cite{gambinosattlerpi}, we note that in fact all
we need is that the interval object $\intv$ has connections, in the
form of two maps $c_i \colon I \otimes I \rightarrow I$ for $i = 0, 1$
satisfying appropriate equalities.

\begin{lemma}
  \label{lem:conndefretract}
  Suppose that we are given maps
  $\intv \otimes \intv \rightarrow \intv$ giving connections on
  $\intv$ satisfy the conditions given by Sattler in \cite[Section
  3.2]{sattlermodelstructures}. Then $X \rightarrow P_Y X$ always has
  the structure of a strong deformation retract (even if $f$ is not a
  fibration).
\end{lemma}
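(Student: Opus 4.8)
The plan is to produce the three pieces of a structured strong deformation retract of $P_Y X$ onto $X$ directly, by an explicit formula built from the connection, so that no lifting — and hence no fibrancy of $f$ — is ever needed. The retraction will be the endpoint projection $e_0 \colon P_Y X \to X$ and the section will be the canonical map $r \colon X \to P_Y X$; since $r$ picks out constant paths and $e_0$ evaluates at the source endpoint, the identity $e_0 \circ r = 1_X$ holds immediately. The only real content is the homotopy $h \colon P_Y X \otimes \intv \to P_Y X$ from $r \circ e_0$ to $1_{P_Y X}$ satisfying the degeneracy condition $h \circ (r \otimes \intv) = r \circ \pi_0$.

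To define $h$ I would use the universal property of the pullback defining $P_Y X$ together with the adjunction $- \otimes \intv \dashv P$. Write $\mathrm{ev} \colon P_Y X \otimes \intv \to X$ for the transpose of the canonical map $P_Y X \to P(X)$, i.e. the universal path lying over $Y$. Using a connection $c_0 \colon \intv \otimes \intv \to \intv$ satisfying Sattler's identities, I would define the $P(X)$-component of $h$ as the transpose of
\[
  P_Y X \otimes \intv \otimes \intv \xrightarrow{P_Y X \otimes c_0} P_Y X \otimes \intv \xrightarrow{\mathrm{ev}} X,
\]
and the $Y$-component as $P_Y X \otimes \intv \xrightarrow{\pi_0} P_Y X \to Y$. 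Informally this sends a path $\gamma$ and a time $t$ to the reparametrised path $s \mapsto \gamma(c_0(s,t))$.

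The boundary and degeneracy conditions then all reduce to connection identities. From $c_0(s,0) = 0$ one obtains $h \circ (P_Y X \otimes \delta_0) = r \circ e_0$; from $c_0(s,1) = s$ one obtains $h \circ (P_Y X \otimes \delta_1) = 1_{P_Y X}$; and the fact that $r$ selects constant paths gives $h \circ (r \otimes \intv) = r \circ \pi_0$. Each of these is a short diagram chase using the triangle identities of the adjunction, so I would not grind through them in detail.

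I expect the main obstacle to be checking that the two components above actually glue to a single map into the pullback $P_Y X$, that is, that the $P(X)$-component postcomposed with $P(f) \colon P(X) \to P(Y)$ agrees with the $Y$-component viewed as a constant path. This is precisely the step where fibrancy of $f$ was previously needed, and it is where the connection buys us everything: because $P_Y X$ is defined as the pullback over the constant paths $Y \to P(Y)$, the composite $f \circ \mathrm{ev}$ factors through the projection $\pi_0 \colon P_Y X \otimes \intv \to P_Y X$, so $f(\gamma(c_0(s,t)))$ is independent of $c_0(s,t)$ and hence of $s$ and $t$. Thus the reparametrised path still has constant image under $f$ with no filling required, and the compatibility over $P(Y)$ is automatic. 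Functoriality and naturality in $f$ are then immediate, since every map in the construction is canonical.
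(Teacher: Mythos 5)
Your proof is correct, but it takes a genuinely different route from the paper. The paper's proof is a two-line citation: it invokes Gambino and Sattler's observation \cite[Section 2]{gambinosattlerpi} that connections make $P$ a functorial cylinder on the opposite category, so that $X \to P_Y(X)$ is a strong deformation retract by \cite[Remark 4.2]{gambinosattlerpi}, with the extra conditions of \cite[Section 3.2]{sattlermodelstructures} supplying the relative version, i.e.\ the statement for $P_Y$ rather than just $P$. You instead build the structure by hand: section $r$, retraction $e_0$, and homotopy $h$ given by transposing $\mathrm{ev} \circ (P_Y X \otimes c_0)$, checking the boundary equations from $c_0(s,0)=0$ and $c_0(s,1)=s$ and the degeneracy equation from $r$ selecting constant paths. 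Your key gluing step --- that $f \circ \mathrm{ev}$ factors through $\pi_0$ because $P_Y X$ is pulled back along the constant-path map $Y \to P(Y)$, so the reparametrised path stays over a constant path with no filling --- is exactly the content that, in the paper's route, is hidden inside the claim that Sattler's conditions make the cylinder argument work fibrewise over $Y$; verifying it explicitly as you do is the right move, and it is also where one sees precisely why no fibration structure on $f$ is needed. What your approach buys is self-containedness and a clear accounting of which connection identities are actually used; what the paper's approach buys is brevity and functoriality for free from the cited framework, whereas in your version functoriality holds because every map in your construction is canonical (as you note), though a careful write-up would still record the diagram showing that a morphism of maps $f \to f'$ commutes with the $h$'s.
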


\begin{proof}
  As Gambino and Sattler point out in \cite[Section
  2]{gambinosattlerpi}, the connections on $\intv$ give $P$ the
  structure of a functorial cylinder in the opposite category. It
  follows that the maps $X \rightarrow P_Y(X)$ are strong deformation
  retracts by \cite[Remark 4.2]{gambinosattlerpi} noting that the
  extra conditions added in \cite[Section 3.2]{sattlermodelstructures}
  imply that this holds for $P_Y$ for all $Y$ rather than just
  $P$.
\end{proof}

To give the map $X \rightarrow \id_Y(X)$ the structure of a strong
deformation retract, we use the lemma below to ``lift'' the strong
deformation retract structure on the map $X \rightarrow P_Y(X)$.

\begin{lemma}
  \label{lem:liftstrdefretract}
  Suppose we are given a diagram as below.
  \begin{equation*}
    \xymatrix{ & B' \ar[d]^f \\
      A \ar[r]^r \ar[ur]^{r'} & B}
  \end{equation*}
  Suppose further that $r$ is given the structure of a strong
  deformation retract of the form $i \colon B \rightarrow A$ and $h
  \colon B \otimes \intv \rightarrow B$, that $f$ is given the
  structure of a trivial fibration, and that $r'$ is also given the
  structure of a cofibration. Then we can assign $r'$ the
  structure of a strong deformation retract of the form $i' \colon B'
  \rightarrow A$ and $h' \colon B' \otimes \intv \rightarrow \intv$,
  satisfying the following commutative diagrams.
  \begin{equation}
    \label{eq:2}
    \begin{gathered}
      \xymatrix{ & B' \ar[dl]_{i'} \ar[d]^f \\
        A & B \ar[l]^i }
    \end{gathered}
    \qquad
    \begin{gathered}
      \xymatrix{ B' \otimes \intv \ar[r]^{h'} \ar[d]_{f \otimes \intv}
        & B' \ar[d]^f \\
        B \otimes \intv \ar[r]_h & B}
    \end{gathered}
  \end{equation}
  Furthermore this assignment is functorial.
\end{lemma}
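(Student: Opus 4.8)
The plan is to lift the strong deformation retract data along $f$ by solving a single lifting problem of the cofibration $r' \poprod [\delta_0, \delta_1]$ against the trivial fibration $f$. First I would define the retraction directly, taking $i' := i \circ f$; the identity $i' \circ r' = i \circ f \circ r' = i \circ r = 1_A$ then follows at once from $f \circ r' = r$ and the fact that $(i, h)$ is a strong deformation retract structure on $r$. The homotopy $h'$ will be obtained as a diagonal filler, and the two commuting squares of \eqref{eq:2} will drop out as the two triangles of the lift together with the definition of $i'$.

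Next I would assemble the prescribed boundary data into a map $u$ out of $\dom(r' \poprod [\delta_0, \delta_1])$, which is the pushout of $A \otimes \intv \leftarrow A \otimes 2 \rightarrow B' \otimes 2$. On the $B' \otimes \delta_0$ component I take $r' \circ i'$, on the $B' \otimes \delta_1$ component I take $1_{B'}$, and on the $A \otimes \intv$ component (included via $r' \otimes \intv$) I take $r' \circ \pi_0$. A short check on the overlap $A \otimes 2$, using $i' \circ r' = 1_A$, shows these agree, so $u$ is well defined. This $u$ records exactly the three equations $h' \circ (B' \otimes \delta_0) = r' \circ i'$, $h' \circ (B' \otimes \delta_1) = 1_{B'}$ and $h' \circ (r' \otimes \intv) = r' \circ \pi_0$ that constitute a structured strong deformation retract on $r'$.

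I would then form the lifting square with top map $u$, left map $r' \poprod [\delta_0, \delta_1]$, right map $f$, and bottom map $h \circ (f \otimes \intv) \colon B' \otimes \intv \to B$. The main work, and the step I expect to be the real obstacle, is verifying that this square commutes, i.e. that $f \circ u = h \circ (f \otimes \intv) \circ (r' \poprod [\delta_0, \delta_1])$. This is checked componentwise: on the $A \otimes \intv$ piece it reduces, after using $f \circ r' = r$, to the strong deformation retract identity $h \circ (r \otimes \intv) = r \circ \pi_0$; on the two endpoint pieces it reduces, via bifunctoriality of $\otimes$ and $i' = i \circ f$, to the boundary identities $h \circ (B \otimes \delta_0) = r \circ i$ and $h \circ (B \otimes \delta_1) = 1_B$. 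Once commutativity is established, axiom \ref{ax:intvbdrylpcof} gives $r' \poprod [\delta_0, \delta_1]$ the structure of a cofibration, so lifting against the trivial fibration $f$ yields a filler $h' \colon B' \otimes \intv \to B'$; its upper triangle recovers the three defining equations via $u$, and its lower triangle $f \circ h' = h \circ (f \otimes \intv)$ is precisely the second square of \eqref{eq:2}.

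Finally, for functoriality I would observe that every piece of the construction is natural in the given data: $i'$ is built from $i$ and $f$, the map $u$ is built from $r'$, $i'$ and $\pi_0$, the cofibration structure on $r' \poprod [\delta_0, \delta_1]$ is functorial in that of $r'$ by the functorial clause of axiom \ref{ax:intvbdrylpcof}, and the diagonal filler is determined functorially because it is selected by a coalgebra structure on the left map and the trivial fibration structure on $f$. Hence a morphism of input diagrams preserving the cofibration, trivial fibration and strong deformation retract structures is carried to one preserving the constructed strong deformation retract structure, as required.
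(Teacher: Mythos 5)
Your proposal is correct and follows essentially the same route as the paper's proof: take $i' := i \circ f$, assemble the boundary data $[r' \circ i', 1_{B'}]$ and $r' \circ \pi_0$ into a map out of the pushout domain of $r' \poprod [\delta_0, \delta_1]$, lift against the trivial fibration $f$ over the bottom map $h \circ (f \otimes \intv)$ using the cofibration structure from axiom \ref{ax:intvbdrylpcof}, and read off the strong deformation retract equations and the second square of \eqref{eq:2} from the two triangles. Your explicit componentwise verification that the lifting square commutes fills in a step the paper leaves implicit (it simply refers back to lemma \ref{lem:mkstrongdefretract}), and it is carried out correctly.
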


\begin{proof}
  First note that the first diagram in \eqref{eq:2} tells us that we
  have to have $i' = i \circ f$, so we take this for the definition of
  $i'$. Then we already see that $i' \circ r' = 1_A$.
  
  Next, following the same outline as in the proof of lemma
  \ref{lem:mkstrongdefretract}, we will define a lifting problem of
  $r' \hat{\otimes} [\delta_0, \delta_1]$ against $f$.

  We again note that to define a map
  $A \otimes \intv +_{A \otimes 2} B' \otimes 2 \rightarrow B'$ is
  therefore to define a map $B' \otimes 2 \rightarrow B'$ and a map
  $A \otimes \intv \rightarrow B'$ ensuring that the two maps
  $A \otimes 2 \rightarrow B'$ agree.

  In fact we define both of these exactly the same as in lemma
  \ref{lem:mkstrongdefretract}.  Namely, we define the map
  $B' \otimes 2 \rightarrow B'$ to be given by $[\delta_0, \delta_1]$
  via the isomorphism $B' \otimes 2 \cong B' + B'$, and we define the
  map $A \otimes \intv \rightarrow B'$ to be $r' \circ \pi_0$.

  We define the lower map $B' \otimes \intv \rightarrow A$ to be $h
  \,\circ\, f \otimes \intv$. We then take $h'$ to be the diagonal
  filler given by the cofibration structure on $r \hat{\otimes}
  [\delta_0, \delta_1]$, as below.
  \begin{equation*}
    \xymatrix{ A \otimes \intv + B' \otimes 2 \ar[d]_{r \hat{\otimes}
        [\delta_0, \delta_1]} \ar[rr] & & B' \ar[d]^{f} \\
      B' \otimes \intv \ar[r]_{f \otimes \intv} \ar@{.>}[urr]^{h'}
      & B \otimes \intv \ar[r]_{h}
      & A}
  \end{equation*}

  Exactly the same as for lemma \ref{lem:mkstrongdefretract}, the
  upper triangle ensures that $h'$ is a homotopy from $r' \circ i'$ to
  $1_{B'}$, and that $h' \,\circ\, r' \otimes \intv = r' \circ \pi_0$.
  Hence, this does give a strong deformation retract.

  The lower triangle tells us that the square in \eqref{eq:2}
  commutes.

  Functoriality is again the same as for lemma
  \ref{lem:mkstrongdefretract}.
\end{proof}

\begin{remark}
  We can in fact recover lemma \ref{lem:mkstrongdefretract} as a
  special case of lemma \ref{lem:liftstrdefretract}, by taking $A = B$
  and $r$ to be $1_A$, which trivially has the structure of a strong
  deformation retract.
\end{remark}

We can now give an alternative proof of lemma
\ref{lem:reflretract}. We split it into two steps. First show that
the reflexivity map $X \rightarrow P_Y(X)$ is a strong deformation
retract using lemma \ref{lem:conndefretract}, and then lift this
structure to the map $X \rightarrow \id_Y(X)$ using lemma
\ref{lem:liftstrdefretract}.

This version of the proof makes essential use of the connections on
$\intv$, but now applies to any map $f$, without needing any fibration
structure on $f$.

\subsection{Avoiding a Transfinite Construction}
\label{sec:using-step-one}

Note that the definition of the cofibrantly generated awfs $(C, F^t)$
according to Garner's small object argument \cite{garnersmallobject}
involves a transfinite construction. The second observation, by Coquand,
was that in fact a transfinite construction is not necessary, and one
can give a much simpler definition that suffices for constructing
identity types.

The key point is that there is a much simpler awfs $(C_1, F^t_1)$ such
that $(C, F^t)$ is algebraically free on the underlying lawfs of
$(C_1, F^t_1)$. One way of understanding $(C_1, F^t_1)$ is as an
internal version of step-one of Garner's small object argument. See
\cite[Section 7.5.2]{swanliftprob} for more precise explanation of
this. As observed by Gambino and Sattler it can also be viewed as a
partial map classifier. See \cite[Remark 9.5]{gambinosattlerpi} for
that description. See also the description by Van den Berg and Frumin in
\cite{vdbergfrumin}.
In any case this construction gives an lawfs, and as
Bourke and Garner show in \cite{bourkegarnerawfs1}, extending an lawfs
to an awfs corresponds precisely to giving a natural way of composing
$C_1$-coalgebras\footnote{Here we mean coalgebras over
  the comonad $C_1$.}. So, the reason that we can do this is that in
CCHM cubical sets the generating cofibrations can be composed
(see \cite[Section 7.5.2]{swanliftprob} for more detail). As
noticed by Sattler (and as explained in \cite[Remark
9.5]{gambinosattlerpi}), although $(C, F^t)$ is algebraically free
on the underlying lawfs of $(C_1, F^t_1)$, these are definitely
different awfs's.

We will now show that in general in this situation we can instead use
$(C_1, F^t_1)$ to construct the identity types.

For convenience, we will continue to assume that we are given a
pre-ams $(C^t, F) \rightarrow (C, F^t)$. However, we observe that now
neither awfs is being used to construct objects, but only to define
the categories of (trivial) cofibrations and fibrations. It is
possible to use this idea to rephrase the results to work without
those awfs's at all. Indeed the proofs in \cite{coquandcubicaltt},
\cite{pittsortoncubtopos} and
\cite{vdbergfrumin} do not use any transfinite construction.

\begin{lemma}
  \label{lem:c1f1tocf}
  Suppose that $(C_1, F^t_1)$ is an lawfs and
  $\zeta \colon (C_1, F^t_1) \rightarrow (C, F^t)$ is a morphism of
  lawfs's witnessing that $(C, F^t)$ is algebraically free on
  $(C_1, F^t_1)$. Suppose further that
  $\mu \colon (F^t_1)^2 \rightarrow F^t$ is a natural transformation
  making $F^t_1$ into a monad. Then for each $f$ we can assign
  $C_1 f$ the structure of a cofibration and $F^t_1 f$ the structure
  of a trivial fibration.

  Moreover, this assignment is functorial in $f$.
\end{lemma}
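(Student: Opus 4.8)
The plan is to treat the two assertions separately, in each case transporting a canonical (co)free structure across the comparison $\zeta$ and exploiting that $(C, F^t)$ is algebraically free on $(C_1, F^t_1)$.

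First I would handle $F^t_1 f$. Since $\mu$ makes $F^t_1$ into a monad, the component $\mu_f \colon (F^t_1)^2 f \to F^t_1 f$ exhibits $F^t_1 f$ as the free $F^t_1$-algebra on $f$; in particular $F^t_1 f$ carries a canonical algebra structure for the underlying pointed endofunctor of $F^t_1$. The content of algebraic freeness is precisely that the functor $\rtmap \to F^t_1\text{-}\mathbf{Alg}$ induced by $\zeta$ is an isomorphism of categories over $\catc^\btwo$. Applying its inverse to the free algebra $(F^t_1 f, \mu_f)$ produces an $F^t$-algebra structure on the same underlying map $F^t_1 f$, i.e. the structure of a trivial fibration. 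Naturality of $\mu$ and of the comparison isomorphism makes this assignment functorial in $f$.

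For $C_1 f$ I would argue dually. The left factor $C_1 f$ should come equipped with its canonical $C_1$-coalgebra structure (the comultiplication of the lawfs making $C_1 f$ the cofree coalgebra on $f$), and a morphism of lawfs's induces a functor $C_1\text{-}\mathbf{Coalg} \to \cMap$ over $\catc^\btwo$ by pushing a coalgebra structure forward along $\zeta$. Applying this functor to the canonical coalgebra on $C_1 f$ yields a $C$-coalgebra, hence a cofibration, structure on $C_1 f$, again functorially. Should only a copointed-endofunctor coalgebra structure be available at this stage, one can repair it to a genuine comonad coalgebra using Proposition \ref{prop:fixcofibs}, since $(C, F^t)$ is pullback stable.

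The hard part will be Part B, and more precisely pinning down exactly what structure the lawfs $(C_1, F^t_1)$ places on its left factor and checking that $\zeta$ respects it. If $C_1$ is genuinely a comonad the cofree coalgebra on $C_1 f$ is immediate; if the lawfs provides only a copointed endofunctor, then $C_1 f$ need not carry any canonical coalgebra structure and the cofibration structure must be built by hand. A natural fallback is to exhibit $C_1 f$ as a retract in $\catc^\btwo$ of the genuine cofibration $Cf$: the comparison supplies a map $(1_X, \zeta_f) \colon C_1 f \to Cf$, and lifting $Cf$ against the trivial fibration $F^t_1 f$ (available from Part A) supplies a candidate retraction $(1_X, \rho)$ with $\rho \circ Cf = C_1 f$ and $F^t_1 f \circ \rho = F^t f$. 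The delicate point, and the step I expect to require real care, is forcing $\rho \circ \zeta_f = 1_{K_1 f}$, which does not follow formally from the bare lifting property and must instead be extracted from the coherence of the algebraically-free comparison; once a genuine retract is in hand, closure of $\cmap$ under retracts finishes the argument.
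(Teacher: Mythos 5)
Your main argument coincides with the paper's proof: for $F^t_1 f$ you use $\mu_f$ to get a pointed-endofunctor $F^t_1$-algebra structure and transport it across the isomorphism between $\rtmap$ and pointed-endofunctor $F^t_1$-algebras given by algebraic freeness, and for $C_1 f$ you push the comultiplication coalgebra forward along the functor on coalgebras induced by $\zeta$ --- exactly the two steps in the paper. Your closing worry is moot: an lawfs has by definition a comonad as its left part (which is precisely why only the monad structure $\mu$ on $F^t_1$ has to be assumed separately in the lemma), so the retract fallback --- whose gap, the equation $\rho \circ \zeta_f = 1$, you correctly identify as not following from bare lifting --- is never needed, and neither is the appeal to Proposition \ref{prop:fixcofibs}.
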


\begin{proof}
  First note that the morphism $\zeta$ gives us a canonical map from
  $C_1$-coalgebras to $C$-coalgebras commuting with the forgetful
  functors.

  We apply this functor to the canonical $C_1$-coalgebra structure on
  $C_1 f$ given by comultiplication to give it the structure of a
  $C$-coalgebra.

  Similarly, $\zeta$ gives a morphism from $F^t$-algebras to
  $F^t_1$-algebras commuting with the forgetful functor. By the
  definition of algebraic freeness this functor is an isomorphism, and
  so its inverse is a functor from $F^t_1$-algebras (in the pointed
  endofuntor sense) to $F^t$-algebras (in the monad sense).

  Next, note that we can assign $F_1^t f$ the structure of an
  $F_1$-algebra using the multiplication $\mu$. We then forget that
  the algebra respects the multiplication to get an algebra over
  $F_1^t$ as a pointed endofunctor, and then apply the functor above
  to give it the structure of an $F^t$-algebra.
\end{proof}

\begin{theorem}
  Suppose that $\xi \colon (C^t, F) \rightarrow (C, F^t)$ is a pre-ams
  satisfying the conditions in section \ref{sec:some-suff-cond}, 
  that $(C, F^t)$ is algebraically free on an lawfs $(C_1, F^t_1)$ and
  that we are given a multiplication map $\mu$ making $F^t_1$ a
  monad.

  For each $f \colon X \rightarrow Y$, define $\id_Y(X)$ to be given
  by the $(C_1, F^t_1)$ factorisation of the map
  $X \rightarrow P_Y(X)$ as in the diagram below.
  \begin{equation*}
    \xymatrix{ & \id_Y(X) \ar[dr]^{F^t_1 r} & \\
      X \ar[ur]^{C_1 r} \ar[rr]_r & & P_Y(X)}
  \end{equation*}
  Then $\id_Y(X)$ can be equipped with structure of functorial choice
  of very good path objects.
\end{theorem}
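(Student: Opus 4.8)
The plan is to transcribe the proof of the earlier theorem on $\id_Y(X)$ almost verbatim, replacing the $(C,F^t)$ factorisation by the $(C_1,F^t_1)$ factorisation $X \xrightarrow{C_1 r} \id_Y(X) \xrightarrow{F^t_1 r} P_Y(X)$. The one genuinely new input is lemma \ref{lem:c1f1tocf}: under the standing hypotheses (algebraic freeness of $(C,F^t)$ over $(C_1,F^t_1)$ together with the monad multiplication $\mu$ on $F^t_1$) it functorially equips $C_1 r$ with a cofibration structure and $F^t_1 r$ with a trivial fibration structure. Since lemma \ref{lem:c1f1tocf} is precisely engineered so that the two legs of the $(C_1,F^t_1)$ factorisation carry the same kinds of structure as the legs of the $(C,F^t)$ factorisation, every step of the original argument acquires a direct analogue.

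First I would establish the analogue of lemma \ref{lem:reflretract}, namely that $C_1 r \colon X \to \id_Y(X)$ admits a strong deformation retract structure. The retraction is $i := e_0 \circ F^t_1 r$, where $e_0 \colon P_Y X \to X$ is the projection; since $e_0 \circ r = 1_X$ and $F^t_1 r \circ C_1 r = r$ one has $i \circ C_1 r = 1_X$. Lemma \ref{lem:projistrivfib} makes $e_0$ a trivial fibration and lemma \ref{lem:c1f1tocf} makes $F^t_1 r$ a trivial fibration, so their composite $i$ is a trivial fibration, and lemma \ref{lem:mkstrongdefretract} then supplies a functorial strong deformation retract structure on $C_1 r$. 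Combining the cofibration structure on $C_1 r$ from lemma \ref{lem:c1f1tocf} with this strong deformation retract structure, lemma \ref{lem:cofsdrimpliestrivcof} upgrades $C_1 r$ to a trivial cofibration, i.e.\ equips it with a $C^t$-coalgebra structure, functorially in $f$.

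Next I would give the map $p_f \circ F^t_1 r \colon \id_Y(X) \to X \times_Y X$ a fibration structure. By lemma \ref{lem:c1f1tocf} the map $F^t_1 r$ is a trivial fibration, so the comparison map $\xi \colon (C^t,F) \to (C,F^t)$ assigns it a fibration structure; composing this functorially with the fibration structure already placed on $p_f$ in section \ref{sec:proof-exist-ident} yields the desired $F$-algebra structure on the composite. Together with the previous paragraph this produces a functorial choice of very good path objects. If the stable version is wanted, stability follows exactly as in theorem \ref{thm:strfromwk} from pullback stability of $P$ and of the factorisation, the latter holding once $(C_1,F^t_1)$ preserves pullbacks, which is the case for the step-one / partial-map-classifier constructions we have in mind.

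The work here is bookkeeping rather than conceptual: there is no real obstacle once lemma \ref{lem:c1f1tocf} is in place. The only point deserving care is verifying that $i = e_0 \circ F^t_1 r$ genuinely retracts $C_1 r$ and inherits a trivial fibration structure, so that lemma \ref{lem:mkstrongdefretract} applies; the rest is a functorial rewriting of the earlier proof. When the interval carries connections I would instead obtain the strong deformation retract structure on $C_1 r$ in two steps, using lemma \ref{lem:conndefretract} to make $X \to P_Y X$ a strong deformation retract and lemma \ref{lem:liftstrdefretract} to lift this along the trivial fibration $F^t_1 r$; this variant needs no fibration hypothesis on $f$.
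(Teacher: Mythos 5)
Your proposal is correct and takes essentially the same route as the paper, whose proof simply applies lemma \ref{lem:c1f1tocf} to equip $C_1 r$ with a cofibration structure and $F^t_1 r$ with a trivial fibration structure and then repeats the argument of section \ref{sec:proof-exist-ident} unchanged. The details you spell out (the retraction $i = e_0 \circ F^t_1 r$, the use of lemmas \ref{lem:projistrivfib}, \ref{lem:mkstrongdefretract} and \ref{lem:cofsdrimpliestrivcof}, and composing fibration structures via the comparison map $\xi$) are exactly the steps of that earlier proof.
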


\begin{proof}
  We apply lemma \ref{lem:c1f1tocf} to give $C_1 r$ the structure of a
  $C$-coalgebra and $F^t_1 r$ the structure of a $F^t$-algebra. We
  then continue with exactly the same proof as in section
  \ref{sec:proof-exist-ident}
\end{proof}

\bibliographystyle{abbrv}
\bibliography{mybib}{}

\end{document}